\theoremstyle{definition}
\newtheorem{theorem}{Theorem}[section]
\newtheorem{conjecture}[theorem]{Conjecture}
\newtheorem{proposition}[theorem]{Proposition}
\newtheorem{lemma}[theorem]{Lemma}
\newtheorem{remark}[theorem]{Remark}
\newtheorem{corollary}[theorem]{Corollary}
\newtheorem*{remark*}{Remark}
\numberwithin{equation}{section}
\newcommand{\mf}{\mathbf}
\newcommand{\R}{\mathbb{R}}
\title[Codimension Bound]{Codimension Bounds and Rigidity of Ancient Mean Curvature Flows by the Tangent Flow at $-\infty$}
\author{Douglas Stryker}
\address{Department of Mathematics, Massachusetts Institute of Technology, 77 Massachusetts Avenue, Cambridge, MA 02139, USA}
\address{Current address: Department of Mathematics
	Princeton University
	Fine Hall, Washington Road
	Princeton, NJ 08544, USA}
\email{dstryker@princeton.edu}
\author{Ao Sun}
\address{Department of Mathematics, Massachusetts Institute of Technology, 77 Massachusetts Avenue, Cambridge, MA 02139, USA}
\address{Current address: Department of Mathematics,
University of Chicago,
5734 S. University Avenue,
Chicago, IL, 60637, USA}
\email{aosun@uchicago.edu}
\begin{document}

\maketitle

\begin{abstract}
Motivated by the limiting behavior of an explicit class of compact ancient curve shortening flows, by adapting the work of Colding-Minicozzi in \cite{CM-complexity}, we prove codimension bounds for ancient mean curvature flows by their tangent flow at $-\infty$. In the case of the $m$-covered circle, we apply this bound to prove a strong rigidity theorem. Furthermore, we extend this paradigm by showing that under the assumption of sufficiently rapid convergence, a compact ancient mean curvature flow is identical to its tangent flow at $-\infty$.
\end{abstract}

\section{Introduction}
A family of immersed $n$-dimensional submanifolds $M_t^n \subset \R^N$ evolves along the mean curvature flow if its coordinates satisfy the equation
\[
\partial_t x = -\vec{H},
\]
where $\vec{H}$ is the mean curvature vector, given by minus the trace of the second fundamental form. This equation can also be written as a geometric heat equation in the form
\[
\partial_t x = \Delta_{M_t} x.
\]
The mean curvature flow is the negative gradient flow for the volume of submanifolds induced by $\R^N$, so solutions to this flow optimally decrease their volume.
In particular, the one dimensional mean curvature flow, called the curve shortening flow, optimally decreases the length of immersed curves.

The mean curvature flow has been the subject of extensive study in codimension one (see \cite{bwhite02}, \cite{CM-generic}, \cite{CMP-MCF}, \cite{CM-uniqueness}); namely for hypersurfaces $M_t^n \subset \R^{n+1}$. The mean curvature flow for higher codimension in Euclidean space, which is the focus of this paper, presents a unique challenge (see \cite{Wang-Long-time}, \cite{Smoczyk}, \cite{CM-complexity}, \cite[\S 0.3]{CM-regularity}). For example, the avoidance principle used to handle flows with codimension one fails for higher codimension. To clarify, by codimension, we mean the codimension of the solution in the Euclidean subspace of minimal dimension that still contains the solution.

In this paper, the primary objects of study are \emph{ancient} solutions to the mean curvature flow. A solution is called ancient if it is defined for all time in the interval $(-\infty, 0)$. Ancient solutions to the mean curvature flow are models for the singularities formed under the flow. Therefore, the study of ancient mean curvature flows is important to the study of the singular behavior of the mean curvature flow. Ancient solutions in codimension $1$ have been extensively studied (see \cite{WangXuJia}, \cite{Huisken-Sinestrari-ancient}, \cite{Haslhofer-Hershkovits}, \cite{ADS}, \cite{Brendle-Choi}, \cite{choi2018ancient}).

Our first result is the following sharp codimension bound.

\begin{theorem}\label{thm:samecodim}
Let $\Sigma$ be a compact shrinker (i.e.~satisfying $\vec{H} \equiv \frac{x^{\perp}}{2}$), and let $\lambda_1$ be the smallest nonzero eigenvalue of the drift Laplacian $\mathcal{L}_{\Sigma} := \Delta_{\Sigma} + \frac{1}{2}\nabla_{x^T}$. Let $M_t^n \subset \R^N$ be an ancient mean curvature flow so that $\frac{M_t}{\sqrt{-t}}$ is an $\epsilon(t)$ $C^1$-graph over $\Sigma$, where
\[ \lim_{t \to -\infty} (-t)^{\frac{1}{2} - \delta}\epsilon(t) = 0
\]
for some $\delta < \lambda_1$. Then $\mathrm{codim}(M_t) = \mathrm{codim}(\Sigma)$. Moreover, $M_t$ lies in the same subspace of $\R^N$ as $\Sigma$ for all $t$.
\end{theorem}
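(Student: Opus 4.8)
The plan is to pass to the rescaled flow and control the component of the graph function in the directions \emph{extra} to $\Sigma$.

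Let $V\subseteq\R^N$ be the smallest linear subspace containing $\Sigma$, set $k=\dim V$, so $\mathrm{codim}(\Sigma)=k-n$; note a compact shrinker cannot lie in a proper affine subspace of $V$, since if $\Sigma\subset\{\langle x,v\rangle=c\}$ then $v$ is normal along $\Sigma$ and $0=\Delta_\Sigma\langle x,v\rangle=-\tfrac12\langle x,v\rangle=-\tfrac12 c$ forces $c=0$. As $T\Sigma\subseteq V$, the normal bundle splits orthogonally $N\Sigma=(N\Sigma\cap V)\oplus V^\perp$. Pass to the rescaled flow $\widetilde M_\tau=M_t/\sqrt{-t}$, $\tau=-\log(-t)$ (so $\tau\to-\infty$ as $t\to-\infty$, with $\Sigma$ a stationary solution); the hypothesis becomes: $\widetilde M_\tau$ is the normal graph over $\Sigma$ of a function $u(\cdot,\tau)$ with $\|u(\cdot,\tau)\|_{C^1(\Sigma)}\le\epsilon(\tau)$ and $\epsilon(\tau)=o(e^{(\frac12-\delta)\tau})$. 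Write $u=u^V+w$ with $w=u^{V^\perp}$. It suffices to show $w(\cdot,\tau)$ is constant on $\Sigma$ for each $\tau$: then $\widetilde M_\tau$, hence $M_t=\sqrt{-t}\,\widetilde M_\tau$, lies in a $k$-dimensional affine subspace, giving $\mathrm{codim}(M_t)\le k-n$; the reverse inequality is immediate, since for $\tau\ll0$ the graph map sends $k+1$ affinely independent points of $\Sigma$ to affinely independent points of $\widetilde M_\tau$.

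The crux is that $w$ satisfies a semilinear parabolic equation whose structure decouples from the geometry of $\Sigma$. Because $\Sigma\subset V$ with $V$ flat and totally geodesic, $A$ and $\vec H$ are $V$-valued and the normal connection restricts to the flat connection on $V^\perp$-valued sections; hence the linearization of the rescaled flow at $\Sigma$, restricted to $V^\perp$-valued normal sections, is $\tfrac12-\mathcal{L}_\Sigma$ acting componentwise (every curvature/Simons-type term pairs $A$ or $\vec H$ against $V^\perp$ and dies). Moreover — the essential structural input — the nonlinearity $Q^{V^\perp}$ in the $V^\perp$-component of the graphical equation annihilates every $u$ whose derivatives $\nabla u,\nabla^2 u$ have vanishing $V^\perp$-part (equivalently: translating a rescaled flow lying in $V$ by a constant $V^\perp$-vector evolves its $V^\perp$-coordinate exactly linearly). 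Writing $w=\bar w(\tau)+w_\perp$ with $\bar w(\tau)=\fint_\Sigma w$ and $w_\perp$ the mean-zero part (so $\nabla w=\nabla w_\perp$), this yields
\[
\partial_\tau w_\perp=\Bigl(\tfrac12-\mathcal{L}_\Sigma\Bigr)w_\perp+E_\perp,\qquad |E_\perp(\cdot,\tau)|\lesssim\|w_\perp(\cdot,\tau)\|_{C^2}\,\|u(\cdot,\tau)\|_{C^2}\lesssim\epsilon(\tau)\,\|w_\perp(\cdot,\tau)\|_{C^2},
\]
where interior parabolic estimates (using compactness of $\Sigma$) bound $\|w_\perp(\cdot,\tau)\|_{C^2}$ by $\sup_{[\tau-1,\tau]}\|w_\perp\|_{L^2}$.

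Now I would show $w_\perp\equiv0$ via a spectral/ODE argument. Let $0=\mu_0<\mu_1=\lambda_1\le\mu_2\le\cdots$ be the eigenvalues of $-\mathcal{L}_\Sigma$ with eigenfunctions $\phi_j$ ($\phi_0$ constant), and expand $w_\perp=\sum_{j\ge1}a_j(\tau)\phi_j$, so $a_j'=(\tfrac12-\mu_j)a_j+E_{\perp,j}$ with $|E_{\perp,j}(\tau)|\lesssim\epsilon(\tau)\|w_\perp(\tau)\|_{C^2}$. For the finitely many modes with $\tfrac12-\mu_j>0$, the Duhamel formula from $-\infty$ converges precisely because $\mu_j\ge\lambda_1>\delta$ makes $\epsilon=o(e^{(\frac12-\mu_j)\tau})$ and $e^{-(\frac12-\mu_j)s}|E_{\perp,j}(s)|$ integrable; for $\tfrac12-\mu_j\le0$ it converges trivially; combined with an energy estimate on the stable tail of the spectrum and the parabolic $C^2$-bound, this gives $\|w_\perp(\tau)\|_{L^2}\lesssim\epsilon(\tau)\sup_{[\tau-3,\tau]}\|w_\perp\|_{L^2}$. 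Iterating drives $\|w_\perp(\tau)\|_{L^2}$ below $\epsilon(\tau)^m$ for every $m$ (with a factorial gain in the constants), so $w_\perp\equiv0$; equivalently, a nonzero $w_\perp$ would have a leading eigenmode asymptotic decaying no faster than $e^{(\frac12-\lambda_1)\tau}$, contradicting $\|w_\perp(\tau)\|\le\epsilon(\tau)=o(e^{(\frac12-\delta)\tau})$ since $\delta<\lambda_1$. With $w_\perp\equiv0$, the $V^\perp$-component of the rescaled flow equation reduces to $\bar w'(\tau)=\tfrac12\bar w(\tau)$, so $\bar w(\tau)=b_0 e^{\tau/2}$ for a fixed $b_0\in V^\perp$, hence $M_t=\sqrt{-t}\,\widetilde M_\tau\subset V+b_0$, a fixed $k$-plane, which finishes the proof.

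The main obstacle is the structural claim that $Q^{V^\perp}$ vanishes on the whole ``translation set'' (not merely when $w\equiv0$), so that $E_\perp$ carries a gained factor and is governed by $\|w_\perp\|_{C^2}$ rather than $\|u\|_{C^2}$ — and, relatedly, making the Duhamel/energy bootstrap quantitative enough, tracking the mode-dependent constants, to extract \emph{exact} vanishing of $w_\perp$ rather than only rapid decay. The threshold $\delta<\lambda_1$ is exactly what makes every relevant Duhamel integral from $-\infty$ converge; it is sharp, since the spatially constant translation mode $\bar w(\tau)\sim e^{\tau/2}$ survives while the slowest genuinely oscillatory mode decays only like $e^{(\frac12-\lambda_1)\tau}$.
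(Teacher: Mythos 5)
Your route is genuinely different from the paper's. The paper never touches the graphical quasilinear equation: it runs the Colding--Minicozzi parabolic function theory, observing that any extra coordinate function of $M_t$ is caloric and, by the decay hypothesis, lies in $\mathcal{P}_{2\delta'}(M_t)$ for some $\delta'<\lambda_1$; it then plays a lower bound for Gaussian norms of orthogonalized caloric functions (Lemma \ref{lem:lowerbound}, rate $\Omega^{-2\delta-\mu}$) against an upper bound coming from a Poincar\'e inequality transplanted to $\Sigma$ (Lemmas \ref{lem:poincare}--\ref{lem:upperbound1}, rate $\Omega^{2\lambda_1(\mu-1)}$), and $\delta<\lambda_1$ gives the contradiction. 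That argument needs only $C^1$ graphicality and integral (Gaussian $L^2$) information. Your plan is instead a linear-stability/Duhamel analysis at $\tau=-\infty$: decompose the graph function into $V$- and $V^\perp$-parts, note the linearization on $V^\perp$-valued sections is $\mathcal{L}_\Sigma+\tfrac12$ componentwise (this part is right: $A$, $\vec H$, $x^\perp$ are $V$-valued and constant $V^\perp$-vectors are parallel normal sections), and kill all mean-zero $V^\perp$-modes using $\mu_j\ge\lambda_1>\delta$, leaving only the translation mode. This is closer in spirit to the paper's \S4 Carleman-type rigidity than to its \S3 proof, and it would in principle deliver the same conclusion; but as written it has two genuine gaps.

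First, the quantitative structural estimate $|E_\perp|\lesssim\|u\|_{C^2}\|w_\perp\|_{C^2}$ needs $C^2$ information, while the hypothesis gives only $C^1$ closeness; you must first upgrade via interior parabolic estimates for graphs with small gradient, and your stated bound $\|w_\perp(\cdot,\tau)\|_{C^2}\lesssim\sup_{[\tau-1,\tau]}\|w_\perp\|_{L^2}$ is circular as it stands, since the forcing in the $w_\perp$-equation is controlled by $w_\perp$ only \emph{after} the structural bound is established; the loop can be closed by absorption for small $\epsilon$, but setting this up is the heart of your proof and is exactly what the sketch defers. Second, the inequality $\|w_\perp(\tau)\|_{L^2}\lesssim\epsilon(\tau)\sup_{[\tau-3,\tau]}\|w_\perp\|_{L^2}$ is not what Duhamel from $-\infty$ produces: the integral $\int_{-\infty}^{\tau}e^{(\frac12-\mu_j)(\tau-s)}|E_{\perp,j}(s)|\,ds$ draws contributions from all $s\le\tau$ and cannot be localized to a window of fixed length, and consequently the ``iterate below $\epsilon(\tau)^m$'' step does not close as written (each iteration enlarges the time window and accumulates constants). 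The standard repair is a weighted quantity $A(\tau_1):=\sup_{\tau\le\tau_1}e^{-(\frac12-\delta)\tau}\|w_\perp(\tau)\|_{L^2}$, finite by hypothesis, for which a single absorption gives $A(\tau_1)=0$ once the small coefficient beats the constant, hence $w_\perp\equiv0$ for $\tau\le\tau_1$; you then still need a final, omitted step --- forward uniqueness of the mean curvature flow --- to propagate $M_t\subset V+b_0$ from $t\le t_1$ to all $t<0$. (Both your mean-zero reduction and the paper's Poincar\'e lemma also implicitly use connectedness of $\Sigma$.) With these repairs I believe your strategy can be carried out, but the two central estimates above are asserted rather than proved, and the second is false in the form stated.
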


Theorem \ref{thm:samecodim} adapts the work by Colding-Minicozzi in \cite{CM-complexity} to handle arbitrary compact shrinkers. In \cite{CM-complexity} Colding-Minicozzi bounded the complexity of ancient solutions to the mean curvature flow. In \cite[Theorem 0.15]{CM-complexity}, they proved a sharp codimension bound for ancient solutions whose tangent flows at $-\infty$ are round cylinders. We prove Theorem \ref{thm:samecodim} by adapting Colding-Minicozzi's techniques; however we need to overcome new difficulties that arise for arbitrary compact shrinkers instead of round cylinders.

As an example of the practical applications of Theorem \ref{thm:samecodim}, we apply our codimension bound to prove the following rigidity result.

\begin{corollary}\label{cor:circlerigidity}
Let $M_t \subset \R^N$ be an ancient curve shortening flow with only type I singularities so that $\frac{M_t}{\sqrt{-t}}$ is a $\epsilon(t)$ $C^1$-graph over the multiplicity $m$ circle $mS^1(\sqrt{2})$ with
\[ \lim_{t\to -\infty} (-t)^{\frac{1}{2} - \frac{1}{2m^2} + \rho}\epsilon(t) = 0\]
for some $\rho > 0$. Then $M_t \equiv mS^1(\sqrt{-2t})$.
\end{corollary}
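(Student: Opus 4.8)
The plan is to apply Theorem~\ref{thm:samecodim} to reduce to a planar flow, and then combine Huisken's monotonicity formula with the structure theory of compact planar ancient flows to force self-similarity.

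First I would identify $\lambda_1$ for $\Sigma = mS^1(\sqrt2)$. Since this circle is centered at the origin, its position vector is everywhere normal, so $x^T \equiv 0$ and $\mathcal{L}_\Sigma = \Delta_\Sigma = \partial_s^2$ in an arclength parameter $s$; on the $m$-fold cover, of total length $2\pi\sqrt2\,m$, the nonzero eigenvalues of $-\mathcal{L}_\Sigma$ are $k^2/(2m^2)$ for $k \in \Z_{\ge 1}$, so $\lambda_1 = \tfrac{1}{2m^2}$. Decreasing $\rho$ only weakens the hypothesis, so we may assume $0 < \rho < \lambda_1$; setting $\delta := \lambda_1 - \rho \in (0,\lambda_1)$, the decay assumption reads exactly $\lim_{t\to-\infty}(-t)^{1/2-\delta}\epsilon(t) = 0$, and Theorem~\ref{thm:samecodim} gives $\mathrm{codim}(M_t) = \mathrm{codim}(mS^1(\sqrt2)) = 1$. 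Thus each $M_t$ lies in a $2$-plane; since $M_t$ is connected and $\partial_t x = \vec{H}$ is tangent to the plane of $M_t$, this plane is independent of $t$, and after a rigid motion $M_t \subset \R^2$ for all $t$.

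I would then show this compact planar ancient flow is self-similar. Let $(x_0,T)$ be a type~I singular point of $M_t$, which exists since $M_t$ is compact. Huisken's monotonicity formula says $\Theta(t) := \int_{M_t}(4\pi(T-t))^{-1/2}e^{-|x-x_0|^2/(4(T-t))}\,d\mu_t$ is nonincreasing for $t < T$ and is constant only if $M_t$ shrinks self-similarly toward $x_0$ at time $T$. Rescaling together with the $C^1$-convergence $M_t/\sqrt{-t}\to mS^1(\sqrt2)$ gives $\Theta(t) \to \lambda\big(mS^1(\sqrt2)\big) = m\sqrt{2\pi/e}$ as $t\to -\infty$, while the type~I hypothesis gives $\Theta(t)\to\lambda(\Sigma_0)$ as $t\to T^-$, where $\Sigma_0$ is the shrinker appearing as the tangent flow at $(x_0,T)$. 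If $\lambda(\Sigma_0) = m\sqrt{2\pi/e}$, then $\Theta$ is constant, hence $M_t = \sqrt{T-t}\,\Gamma$ for a shrinker $\Gamma$ centered at $x_0$; matching blow-downs forces $\Gamma = mS^1(\sqrt2)$, so $M_t \equiv mS^1(\sqrt{-2t})$ up to a rigid motion and a time-translation. Monotonicity already gives $\lambda(\Sigma_0) \le m\sqrt{2\pi/e}$, so everything comes down to the reverse inequality.

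This last step is the main obstacle: one must rule out that the flow \emph{simplifies}, i.e.\ develops a tangent flow modeled on a lower-multiplicity circle, an Abresch--Langer curve of smaller entropy, or a multiplicity-$\ell$ line. When $m=1$ this is clean --- a closed planar shrinker has entropy $\ge \lambda(S^1) = \sqrt{2\pi/e}$ with equality only for the round circle, while a multiplicity-$\ell$ line has entropy $\ell$, which is excluded for $\ell=1$ by White's regularity theorem and for $\ell\ge2$ by monotonicity since $\ell \ge 2 > \sqrt{2\pi/e}$; hence $\Sigma_0 = S^1$ and equality holds. When $m\ge2$, the entropy bound alone does not suffice, and one must also use that the turning number $\pm m$ of $M_t$ is preserved along the smooth part of the flow and is inherited by $\Sigma_0$, together with the Abresch--Langer classification of compact planar shrinkers (and the ordering of their entropies) --- equivalently, with the classification of compact planar ancient curve shortening flows into homothetic Abresch--Langer shrinkers and ancient $m$-ovals, the latter having multiplicity-$m$ line or Abresch--Langer cone tangent flows at $-\infty$ rather than round circles --- to conclude $\Sigma_0 = mS^1(\sqrt2)$. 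As an alternative, at least for $m=1$, one could bypass the forward singularity and run a Colding--Minicozzi/{\L}ojasiewicz--Simon uniqueness scheme on the rescaled normal graph of $M_t/\sqrt{-t}$ over $S^1(\sqrt2)$, using that the round circle's only unstable Jacobi fields in $\R^2$ come from ambient translations and dilations, to upgrade the $C^1$-convergence directly to identity.
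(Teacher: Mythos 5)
Your reduction to the planar case is the same as the paper's: you compute $\lambda_1\bigl(mS^1(\sqrt2)\bigr)=\tfrac{1}{2m^2}$, set $\delta=\tfrac{1}{2m^2}-\rho$, and apply Theorem~\ref{thm:samecodim} to get codimension one, hence $M_t\subset\R^2$. From there, however, the paper takes a much shorter route than you do: it invokes \cite[Theorem A]{baldauf}, which says that a closed immersed planar curve of turning number $m$ whose curve shortening flow develops only type~I singularities has entropy at least $m\lambda(S^1)$; since the backward convergence gives $\sup_t\lambda(M_t)=m\lambda(S^1)$, monotonicity forces $\lambda(M_t)$ to be constant in $t$, and the equality case of entropy monotonicity identifies $M_t$ with the shrinking $m$-covered circle. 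You instead attempt to prove the needed reverse inequality --- that the Gaussian density $\Theta_{(x_0,T)}$ at a forward type~I singular point is at least $m\lambda(S^1)$ --- from scratch, and for $m\ge 2$ this is precisely where your argument has a genuine gap, as you yourself flag.

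Concretely, the step ``the turning number $\pm m$ of $M_t$ \dots is inherited by $\Sigma_0$'' is not justified: turning number is a global regular-homotopy invariant, whereas the tangent flow at $(x_0,T)$ only records the portion of the curve that remains at bounded distance from $x_0$ after parabolic rescaling; for an immersed curve, a small loop can a priori be responsible for the singularity while the rest of the curve escapes to infinity in the blow-up, so $\Sigma_0$ need not have winding $m$, and then the comparison of $\lambda(\Sigma_0)$ with $m\lambda(S^1)$ collapses. Similarly, the entropy ordering among multiply covered circles, Abresch--Langer curves and multiplicity-$\ell$ lines, and the classification of compact ancient planar flows, are asserted rather than proved; ruling out an ``entropy drop'' before the singular time under only the type~I hypothesis is exactly the nontrivial content of \cite[Theorem A]{baldauf}, so without proving these facts (or citing that theorem) the case $m\ge 2$ is not closed. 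Your $m=1$ branch is essentially complete, and your caveat that self-similarity plus the blow-down only gives $M_t\equiv mS^1(\sqrt{2(T-t)})+x_0$ up to a space-time translation is a fair point --- the hypothesis tolerates $\epsilon(t)\sim(-t)^{-1/2}$ --- but this affects the paper's terse final step equally and is minor compared with the missing lower bound on the density.
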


Finally, we prove a general rigidity theorem for compact rescaled ancient mean curvature flows. For the simplicity of exposition, we prove the following theorem for codimension $1$ solutions. Recall that if $M_t$ is an ancient mean curvature flow defined for $t\in(-\infty,0)$, then we can define a rescaled mean curvature flow 
\[\widetilde{M_t}=\frac{M_{-e^{-t}}}{\sqrt{e^{-t}}}.\]

\begin{theorem}\label{thm:main rigidity-intro}
There exists $\alpha>0$ depending on the shrinker $\Sigma$ such that the following holds. Suppose $\widetilde{M_t}$ is a rescaled mean curvature flow, and can be written as a graph over $\Sigma$ when $-t$ is sufficiently large, i.e. \[\widetilde{M_t}=\{x\in\Sigma:x+\varphi(x,t)\mf{n}(x)\}\]
where $\varphi$ is a $C^2$ function with $\|\varphi\|_{C^2}\to 0$ as $t\to-\infty$.. If
\[\limsup_{t\to-\infty}\varphi^2 e^{-\alpha  t}=0,\]
then $\varphi\equiv 0$ on $\Sigma\times(-\infty,0)$. In other words, $\widetilde{M_t}\equiv \Sigma$.
\end{theorem}
This theorem shows that if the convergence of the ancient mean curvature flow (after rescaling) to its tangent flow at $-\infty$ is sufficiently fast (in particular, much faster than the rate in Theorem \ref{thm:samecodim}), then the ancient rescaled mean curvature flow must be identical with its tangent flow. This kind of rigidity result has appeared in many different contexts for the mean curvature flow (see \cite{Wang2014}, \cite{Wang2016}). Note that if we rescale back to the original mean curvature flow, we obtain the following corollary.

\begin{corollary}
There exists $\alpha>0$ depending on the shrinker $\Sigma$ such that the following holds. Suppose $M_t$ is a mean curvature flow, and $\frac{M_t}{\sqrt{-t}}$ can be written as a graph over $\Sigma$ when $-t$ is sufficiently large, i.e. \[\frac{M_t}{\sqrt{-t}}=\{x\in\Sigma:x+\varphi(x,t)\mf{n}(x)\}\]
where $\varphi$ is a $C^2$ function with $\|\varphi\|_{C^2}\to 0$ as $t\to-\infty$.. If
\[\limsup_{t\to-\infty}\varphi^2 (-t)^{\alpha}=0,\]
then $\varphi\equiv 0$ on $\Sigma\times(-\infty,0)$. In other words, $\frac{M_t}{\sqrt{-t}}\equiv \Sigma$.
\end{corollary}

\subsection{Higher Codimensional Mean Curvature Flow}

The relative scarcity of explicit examples of higher codimensional ancient mean curvature flows is one of the main challenges of this subfield. Presently, we know very few constructions for higher codimensional ancient mean curvature flows. Most of them are solitons of the Lagrangian mean curvature flow (see \cite{Lee-Wang}, \cite{Castro-Lerma}). Choi-Mantoulidis constructed ancient mean curvature flows from unstable minimal submanifolds in \cite{ChMa}.

In this paper, our results are inspired by the behavior of a particular class of ancient solutions to the curve shortening flow in high codimension, which we call torus curves. After the first draft of this paper, we became aware of an existing proof of this construction in \cite{altschuler2013zoo}. The construction in \cite{altschuler2013zoo} is motivated by the symmetry of $\R^n$, whereas here we concentrate on the implications of the solution's high codimensional properties. We mention two implications of this solution.

First, the torus curve solution suggests the sharp value of a codimension bound of Colding-Minicozzi \cite{CM-complexity}. Recall that the entropy of a submanifold $M \subset \R^N$ is defined as
\[ \lambda(M) := \sup_{s \in \R_{>0},~y \in \R^N} (4\pi)^{-\frac{n}{2}}\int_{sM+y} e^{-\frac{|x|^2}{4}}, \]
i.e.~the supremum of the Gaussian integral over all dilations and translations of the submanifold. See \cite{CM-generic} for further discussion. Colding-Minicozzi proved that an ancient mean curvature flow $M_t^n\subset \R^N$ must lie in a Euclidean subspace of dimension $d\leq C_n\sup_{t}\lambda(M_t)$ (see \cite[Corollary 0.6]{CM-complexity}). The torus curve suggests that the sharp value of $C_1$ should be $2/\lambda(S^1)$. See the discussion of Conjecture \ref{conj:c1}.

Second, the torus curve solution illustrates an interesting relation between the codimension and the tangent flow. Recall that the tangent flow of an ancient mean curvature flow is a self-shrinking mean curvature flow generated by the limit of $\frac{M_t}{\sqrt{-t}}$ as $t\to 0$ or $t\to -\infty$. Since the tangent flows are all self-shrinking and determined by the self-shrinker generated the flow, we use the self-shrinker to denote the tangent flow. For the torus curve solution, the tangent flow at $0$ is the embedded circle, and the tangent flow at $-\infty$ is the circle with multiplicity. Since the torus curve solution can have arbitrary codimension, the tangent flow at $0$ \emph{cannot} bound the codimension of an ancient solution. However, the torus curve solution does suggest that the tangent flow at $-\infty$ \emph{can} be used to bound the codimension, inspiring the rest of the results in the paper.

We remark that our codimension bounds are \emph{not} immediate consequences of the entropy-based codimension bound of Colding-Minicozzi. Since the constant $C_n$ is universal for all ancient solutions, their bound could be weak for some ancient solutions whose tangent flow at $-\infty$ is a compact shrinker. Moreover, since the value of $C_n$ is unknown, our result provides a nontrivial bound.

\subsection{Strategy of the Proof of the Codimension Bounds}
For a mean curvature flow $M_t$, the basic idea to bound the codimension coming from \cite{CM-complexity} is to bound the dimension of the space of caloric functions (i.e.~functions satisfying $\partial_t u = \Delta_{M_t} u$) with bounded polynomial growth in space and time. In particular, we study the dimension of the space
\[ \mathcal{P}_d(M_t) := \{u \text{~ancient}\mid \partial_t u = \Delta_{M_t} u \text{~and~} |u(x,t)| \leq C_u(1+|x|^d+|t|^{d/2})\}. \]
The reason such a dimension bound is useful is because the space $\mathcal{P}_1(M_t)$ contains the constant functions and the coordinate functions of $M_t$. Hence, the number of linearly independent coordinate function is at most $\mathrm{dim}~\mathcal{P}_1(M_t) - 1$. Moreover, if we control the rate of convergence of a compact solution to its limiting shrinker as $t \to -\infty$, we can bound the codimension of the solution by bounding the dimension of the space $\mathcal{P}_d(M_t)$ for some $d < 1$.

To prove a dimension bound for $\mathcal{P}_d(M_t)$, we argue by contradiction, assuming the space has a basis with one too many functions. The contradiction arises from the combination of a lower bound and an upper bound for the norms of these caloric functions, where the function norm is given by integration against the Gaussian.

To bound the norms of the basis functions from above, we assume the existence of caloric functions $\psi_i$ that are close to the eigenfunctions of the drift Laplacian $\mathcal{L}_{\Sigma}$ on the limiting shrinker $\Sigma$. Since there is an extra function in the basis for $\mathcal{P}_d(M_t)$, we can make a change of basis so that the last basis function is orthogonal to $1$ and $\psi_i$ for all $i$ with respect to integration against the Gaussian. Since $\frac{M_t}{\sqrt{-t}}$ is close to $\Sigma$, we can transplant this setup to $\Sigma$. The transplantation of the last basis function is then nearly orthogonal to the first few eigenfunctions of $\mathcal{L}_{\Sigma}$. Recall that if a function $u$ is orthogonal to the first $k$ eigenfunctions of $\mathcal{L}_{\Sigma}$, then Rayleigh's inequality gives
\[ 
\lambda_{k+1}\int_{\Sigma} u^2e^{-\frac{|x|^2}{4}} \leq \int_{\Sigma} |\nabla u|^2e^{-\frac{|x|^2}{4}}.
\]
We use this observation to bound the function norms from above.

Compared to the cylinder case from \cite[Theorem 0.15]{CM-complexity}, the precise argument to obtain the upper bound described above in our setting is more delicate. First, in general, there can be eigenvalues in the range $(0, 1/2)$, corresponding to non-coordinate eigenfunctions, which is not the case on round cylinders. Moreover, since these eigenfunctions are not coordinate functions, it is more challenging to bound the error coming from the transplantation of the setup to $\Sigma$ and back.

To bound the norms of the basis functions from below, we use a result established by Colding-Minicozzi that holds for any mean curvature flow (see \cite[Lemma 3.9]{CM-complexity}).
Under the assumption of an extra basis function, these bounds yield a contradiction.

For the sake of exposition, we deal with the compact case. While more technically involved, we suspect the same tools should work in the noncompact setting (see \cite[\S 7]{CM-complexity}).

\subsection{Organization of the Paper}
In \S2, we discuss the higher codimensional properties of the torus curve mean curvature flow.
In \S3, we prove the main codimension bounds.
Finally, in \S4, we prove the rigidity of ancient mean curvature flows under sufficiently rapid convergence as $t \to -\infty$.

\subsection*{Acknowledgements} Both authors are grateful to Professor William Minicozzi for his advisory and helpful suggestions and comments. We want to thank Christos Mantoulidis for bringing our attention to \cite{ChMa}. We also want to thank Professor David Jerison, Professor Ankur Moitra, and Dr.~Slava Gerovich for supporting our research. We also thank the anonymous referee for the helpful comments, especially further clarification of the decay rates between mean curvature flows and rescaled mean curvature flows.

\section{Torus curves}
The results of this paper are motivated by the behavior of the following ancient curve shortening flow, which is due to \cite[p.~1195-1198]{altschuler2013zoo}. For clarity, we write down the explicit construction here.

Let $k_1,\ \hdots,\ k_m$ be an increasing list of positive integers. We construct a $t$-parametrized family of curves $\gamma^{(k_1, \hdots, k_m)}_t \subset \R^{2m}$ (we denote it by $\gamma_t$ when the integers $k_j$ are implied for ease of notation) with coordinate functions of the form
\begin{equation}\label{eq:toruscurve}
(\gamma_t(\theta))_{2j-1} = r(t)^{k_j^2}\cos(k_j\theta)~,~(\gamma_t(\theta))_{2j} = r(t)^{k_j^2}\sin(k_j\theta)
\end{equation}
for $j = 1, \hdots, m$ and $\theta \in [0, 2\pi)$, where $r(t)$ is a positive function.

Intuitively, $\gamma_t$ is a curve on the torus \[  S^1(r^{k_1^2}) \times \hdots \times  S^1(r^{k_m^2}) \subset \R^{2m} \]
that wraps around the $j$th copy of the circle $k_j$ times at constant speed. 

We remark that in \cite{altschuler2013zoo}, $k_1,\cdots,k_m$ are only assumed to be a nondecreasing list, but then $\gamma_t$ may lie in a smaller dimensional Euclidean space. Therefore we require $k_1,\cdots,k_m$ be an increasing list of positive integers.

By standard ODE techniques, \cite{altschuler2013zoo} proved the following result.

\begin{proposition}[(14),(15) of \cite{altschuler2013zoo}]\label{prop:toruscurve}
There is a unique positive function $r(t)$ for $t\in(-\infty, 0)$ satisfying
\begin{equation}\label{eq:toruslimit}
\lim_{t \to -\infty} r(t) = +\infty \text{~and~} \lim_{t \to 0} r(t) = 0
\end{equation}
so that the family of curves $\gamma_t$ with coordinate functions given by (\ref{eq:toruscurve}) defines an ancient solution to the curve shortening flow. In particular, $\gamma_t$ does not lie in any $(2m-1)$-dimensional Euclidean subspace.
\end{proposition}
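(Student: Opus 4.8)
The plan is to substitute the explicit ansatz (\ref{eq:toruscurve}) into the curve shortening flow equation $\partial_t \gamma_t = \Delta_{M_t}\gamma_t$ and reduce it to a scalar ODE for $r(t)$ that is solvable by separation of variables. Write $f_j(t) := r(t)^{k_j^2}$. Differentiating the coordinates in $\theta$ gives
\[ |\partial_\theta\gamma_t|^2 = \sum_{j=1}^m k_j^2 f_j^2 =: L(t)^2, \]
which is \emph{independent of $\theta$}, together with the pointwise identities $\langle \partial_\theta\gamma_t, \partial_\theta^2\gamma_t\rangle \equiv 0$ and $\langle \partial_t\gamma_t, \partial_\theta\gamma_t\rangle \equiv 0$ (in each case the $j$th pair of summands cancels because $\cos\sin - \sin\cos = 0$). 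The second relation shows $\partial_t\gamma_t$ is automatically normal, so the equation $\partial_t\gamma_t = \Delta_{M_t}\gamma_t$ captures the curve shortening flow with no tangential ambiguity; the first gives $\Delta_{M_t}\gamma_t = |\partial_\theta\gamma_t|^{-2}\partial_\theta^2\gamma_t = L^{-2}\partial_\theta^2\gamma_t$. Matching the $(2j-1)$st and $(2j)$th coordinates of $\partial_t\gamma_t = L^{-2}\partial_\theta^2\gamma_t$ and using that the Fourier modes $\cos k_j\theta,\sin k_j\theta$, $1\le j\le m$, are linearly independent (the $k_j$ being distinct), the flow equation becomes the system $f_j' = -k_j^2 L^{-2} f_j$ for $j = 1,\dots,m$.

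The next step is to observe that this apparently overdetermined system for the single unknown $r$ is consistent: substituting $f_j = r^{k_j^2}$ and $L^2 = \sum_i k_i^2 r^{2k_i^2} =: g(r)$, every one of the $m$ equations collapses to the same scalar ODE $r' = -r/g(r)$. Since $g(\rho)/\rho = \sum_j k_j^2\rho^{2k_j^2-1}$ has antiderivative $\tfrac12\sum_j \rho^{2k_j^2}$, separation of variables shows that any solution satisfies $\sum_{j=1}^m r(t)^{2k_j^2} = -2t + C$ for a constant $C$, and the requirement $\lim_{t\to 0^-} r(t) = 0$ forces $C = 0$. Conversely, set $F(\rho) := \sum_{j=1}^m \rho^{2k_j^2}$; since $F'(\rho) > 0$ on $(0,\infty)$ with $F(0^+)=0$ and $F(+\infty)=+\infty$, $F$ is a smooth increasing bijection of $(0,\infty)$ onto itself, so $r(t) := F^{-1}(-2t)$ is a well-defined smooth positive function on $(-\infty,0)$ solving the ODE, with $\lim_{t\to 0^-}r = 0$ and $\lim_{t\to -\infty}r = +\infty$. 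The resulting $\gamma_t$ is a smooth immersion (since $|\partial_\theta\gamma_t|^2 = g(r) > 0$) solving the flow, i.e.\ an ancient curve shortening flow. Uniqueness is immediate: if $\gamma_t$ solves the flow then $r$ is smooth by parabolic regularity, hence solves the scalar ODE, hence equals $F^{-1}(-2t)$ once $\lim_{t\to 0^-}r = 0$ is imposed.

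For the final assertion, fix $t$ and suppose $\gamma_t([0,2\pi))$ were contained in an affine hyperplane $\{x:\langle x,v\rangle = c\}$ with $v\neq 0$. Then $\sum_{j=1}^m f_j\bigl(v_{2j-1}\cos k_j\theta + v_{2j}\sin k_j\theta\bigr) \equiv c$ in $\theta$; since $1,\cos k_1\theta,\sin k_1\theta,\dots,\cos k_m\theta,\sin k_m\theta$ are linearly independent and each $f_j > 0$, this forces $v = 0$, a contradiction. Hence $\gamma_t$ is contained in no affine $(2m-1)$-dimensional subspace, in particular in no $(2m-1)$-dimensional Euclidean subspace.

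There is no deep analytic obstacle here; the content is a separation-of-variables computation together with elementary facts about Fourier modes. The one point requiring genuine care — and the reason the construction works at all — is the consistency of the system $f_j' = -k_j^2 L^{-2} f_j$: a priori these are $m$ distinct constraints on the single function $r$, and it is precisely the matching of the radius exponent $k_j^2$ with the angular frequency $k_j$ in (\ref{eq:toruscurve}) that makes all of them reduce to the single ODE $r' = -r/g(r)$. Verifying the two $\theta$-independence and orthogonality identities for $\partial_\theta\gamma_t$ and $\partial_\theta^2\gamma_t$ is what renders this collapse transparent.
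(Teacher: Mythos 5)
Your proof is correct: substituting the ansatz reduces the flow to the single consistent ODE $r' = -r/\sum_j k_j^2 r^{2k_j^2}$, which integrates to $\sum_j r(t)^{2k_j^2} = -2t$ (giving existence, uniqueness, and both limits), and the Fourier-mode independence argument correctly rules out containment in any affine hyperplane. The paper itself offers no proof, attributing the result to the ``standard ODE techniques'' of \cite{altschuler2013zoo}, and your separation-of-variables reduction is essentially that argument.
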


By studying the limiting behavior of this solution as $t \to -\infty$ and $t \to 0$, \cite{altschuler2013zoo} proved the following proposition.

\begin{proposition}[Examples after (16) of \cite{altschuler2013zoo}]\label{prop:torustangentflow}
The tangent flow to the solution $\gamma_t^{(k_1,\hdots,k_m)}$ at $t = -\infty$ is the multiplicity $k_m$ circle, and the tangent flow at $t = 0$ is the multiplicity $k_1$ circle.
\end{proposition}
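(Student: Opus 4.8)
### Proof Proposal for Proposition \ref{prop:torustangentflow}

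The plan is to compute the two tangent flows directly from the explicit formula \eqref{eq:toruscurve}, exploiting the fact that the coordinate functions are pure powers of $r(t)$ times fixed trigonometric functions. Recall that for the tangent flow at $-\infty$ we study $\frac{\gamma_t}{\sqrt{-t}}$ as $t \to -\infty$, and for the tangent flow at $0$ we study $\frac{\gamma_t}{\sqrt{-t}}$ as $t \to 0$. The $2j$-th and $(2j-1)$-th coordinates of $\frac{\gamma_t}{\sqrt{-t}}$ have magnitude $\frac{r(t)^{k_j^2}}{\sqrt{-t}}$. Since $k_1 < \dots < k_m$, as $r(t) \to \infty$ the term with $j = m$ dominates all others by a factor that blows up, while as $r(t) \to 0$ the term with $j = 1$ dominates. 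So morally the limiting object should be the round circle traversed $k_m$ times (resp. $k_1$ times), provided we can pin down the scale.

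First I would establish the precise asymptotics of $r(t)$ at both ends. Plugging \eqref{eq:toruscurve} into the curve shortening flow equation $\partial_t \gamma = \vec\kappa$ (or equivalently into the ODE that \cite{altschuler2013zoo} derive for $r$), one gets a scalar ODE for $r(t)$; combined with the boundary behavior \eqref{eq:toruslimit} from Proposition \ref{prop:toruscurve}, a matched-asymptotics/ODE-comparison argument should give $r(t)^{2k_m^2} \sim c_m \,(-t)$ as $t \to -\infty$ and $r(t)^{2 k_1^2} \sim c_1\,(-t)$ as $t \to 0$, for explicit positive constants $c_1, c_m$ (determined by forcing the dominant circle to shrink at the self-shrinker rate). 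The cleanest route is to note that near each end the flow is asymptotic to a shrinking round circle of multiplicity $k_m$ (resp. $k_1$), and a multiplicity-$k$ circle $kS^1(\rho(t))$ shrinks under CSF according to $\rho \rho' = -1$, i.e. $\rho(t) = \sqrt{-2t}$ up to time translation; matching $\rho(t) = r(t)^{k_m^2}$ forces the claimed power-law for $r$.

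Next, with these asymptotics in hand, I would show the rescaled flow converges. Write $\frac{\gamma_t(\theta)}{\sqrt{-t}}$ in coordinates: the $j = m$ block is $\frac{r^{k_m^2}}{\sqrt{-t}}(\cos k_m\theta, \sin k_m\theta) \to \sqrt{2}\,(\cos k_m\theta, \sin k_m\theta)$ (choosing normalization so the shrinker is $S^1(\sqrt 2)$), while every block with $j < m$ has magnitude $\frac{r^{k_j^2}}{\sqrt{-t}} = \frac{r^{k_j^2}}{r^{k_m^2}} \cdot \frac{r^{k_m^2}}{\sqrt{-t}} \to 0$ since $r \to \infty$ and $k_j^2 < k_m^2$. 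Hence $\frac{\gamma_t}{\sqrt{-t}}$ converges (smoothly, after checking derivatives in $\theta$ and in $t$ — also controlled by the same power-law gaps) to the planar circle of radius $\sqrt 2$ in the $(x_{2m-1}, x_{2m})$-plane, parametrized as $\theta \mapsto \sqrt2(\cos k_m\theta, \sin k_m\theta)$, which is precisely the multiplicity $k_m$ circle. The argument at $t \to 0$ is identical with the roles of $k_m$ and $k_1$ swapped: now $r \to 0$, so the block with the smallest exponent $k_1^2$ dominates and all others vanish, giving the multiplicity $k_1$ circle.

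The main obstacle is making the convergence to the tangent flow rigorous in the appropriate (smooth, locally graphical) sense rather than merely coordinatewise, and extracting the sharp asymptotics of $r(t)$ — in particular ruling out logarithmic corrections or oscillation in $r$ at the two ends. I expect this is handled by the ODE analysis in \cite{altschuler2013zoo}: the function $r$ solves an autonomous-in-$\log$ first-order ODE whose phase line has exactly the two equilibria-at-infinity corresponding to the two dominant-circle regimes, so monotone convergence to the power-law asymptotics is automatic, and then the subdominant coordinates decay at a definite polynomial rate in $(-t)$, which upgrades coordinatewise convergence to smooth convergence of the rescaled curves. Since the proposition is attributed to \cite{altschuler2013zoo}, I would cite their ODE computation for the asymptotics of $r(t)$ and only spell out the short deduction that the rescaled flows limit onto the claimed multiplicity circles.
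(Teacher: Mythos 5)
Your proposal is correct and follows essentially the same route as the source the paper relies on: the paper gives no proof of this proposition, deferring entirely to \cite{altschuler2013zoo}, whose argument is precisely the direct computation with the scalar ODE for $r(t)$ that you outline. The one obstacle you flag (sharp asymptotics of $r$, possible logarithmic corrections) resolves immediately: substituting \eqref{eq:toruscurve} into the curve shortening flow gives the single separable ODE $r' = -r/\sum_j k_j^2 r^{2k_j^2}$, which integrates (using $r\to 0$ as $t\to 0$) to the exact relation $\sum_j r(t)^{2k_j^2} = -2t$, so $r^{k_m^2}/\sqrt{-t}\to\sqrt{2}$ as $t\to-\infty$ and $r^{k_1^2}/\sqrt{-t}\to\sqrt{2}$ as $t\to 0$, the remaining blocks vanish as you say, and the rescaled trigonometric-polynomial parametrizations converge smoothly to the multiplicity $k_m$ and $k_1$ circles respectively.
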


Using the behavior of the solution from Proposition \ref{prop:torustangentflow}, we can compute the entropy of the solution.

\begin{corollary}\label{cor:torusentropy}
The curve shortening flow $\gamma_t^{(k_1,\hdots,k_m)}$ satisfies $\sup_t\lambda(\gamma_t) = k_m\lambda(S^1)$.
\end{corollary}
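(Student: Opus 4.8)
The plan is to read off $\sup_t\lambda(\gamma_t)$ from Huisken's monotonicity formula together with the identification of the tangent flow at $-\infty$ in Proposition~\ref{prop:torustangentflow}. For $(x_0,t_0)\in\R^{2m}\times\R$ I will use the Gaussian density ratio
\[
\Theta_{(x_0,t_0)}(t):=\frac{1}{\sqrt{4\pi(t_0-t)}}\int_{\gamma_t}e^{-\frac{|x-x_0|^2}{4(t_0-t)}}\,d\mathcal{H}^1(x),
\]
which by Huisken's monotonicity formula is non-increasing in $t$ on $(-\infty,\min\{t_0,0\})$. The first observation is that, substituting $u=(x-x_0)/\sqrt{t_0-t}$, the quantity $\Theta_{(x_0,t_0)}(t)$ is exactly the Gaussian integral $(4\pi)^{-1/2}\int_{s\gamma_t+y}e^{-|u|^2/4}$ with $s=(t_0-t)^{-1/2}$ and $y=-x_0/\sqrt{t_0-t}$, and conversely every dilation--translation pair $(s,y)$ with $s>0$ arises this way from $(x_0,t_0)=(-y/s,\,t+s^{-2})$. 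Hence for each fixed $t<0$,
\[
\lambda(\gamma_t)=\sup_{x_0\in\R^{2m},\ t_0>t}\Theta_{(x_0,t_0)}(t).
\]

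The key step is the limit $\lim_{t\to-\infty}\Theta_{(x_0,t_0)}(t)=k_m\lambda(S^1)$, valid for \emph{every} $(x_0,t_0)$. Writing
\[
\frac{\gamma_t-x_0}{\sqrt{t_0-t}}=\frac{\sqrt{-t}}{\sqrt{t_0-t}}\cdot\frac{\gamma_t}{\sqrt{-t}}-\frac{x_0}{\sqrt{t_0-t}}
\]
and using $\sqrt{-t}/\sqrt{t_0-t}\to1$ and $x_0/\sqrt{t_0-t}\to0$ as $t\to-\infty$, I see that the center $x_0$ and the time shift $t_0$ are negligible after the parabolic rescaling, so the limit is determined by the limit of $\gamma_t/\sqrt{-t}$. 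By Proposition~\ref{prop:torustangentflow}, together with (\ref{eq:toruscurve}) (from which $r(t)^{k_m^2}/\sqrt{-t}\to\sqrt2$ while $r(t)^{k_j^2}/\sqrt{-t}\to0$ for $j<m$), the curves $\gamma_t/\sqrt{-t}$ converge smoothly to the multiplicity $k_m$ circle $k_m S^1(\sqrt2)$, so the weighted length integrals converge and
\[
\lim_{t\to-\infty}\Theta_{(x_0,t_0)}(t)=\frac{1}{\sqrt{4\pi}}\int_{k_m S^1(\sqrt2)}e^{-\frac{|u|^2}{4}}\,d\mathcal{H}^1(u)=k_m\cdot\frac{1}{\sqrt{4\pi}}\int_{S^1(\sqrt2)}e^{-\frac{|u|^2}{4}}\,d\mathcal{H}^1(u)=k_m\lambda(S^1),
\]
where the last equality uses $\lambda(S^1)=\tfrac{1}{\sqrt{4\pi}}\int_{S^1(\sqrt2)}e^{-|x|^2/4}$ (the entropy of a self-shrinker equals its Gaussian area at unit scale centered at the origin).

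Granting these, the corollary follows in two lines. For the upper bound, fix $t<0$; monotonicity gives $\Theta_{(x_0,t_0)}(t)\le\lim_{s\to-\infty}\Theta_{(x_0,t_0)}(s)=k_m\lambda(S^1)$ for all $(x_0,t_0)$ with $t_0>t$, so $\lambda(\gamma_t)\le k_m\lambda(S^1)$ and hence $\sup_t\lambda(\gamma_t)\le k_m\lambda(S^1)$. For the lower bound, take $(x_0,t_0)=(0,0)$: then $\sup_t\lambda(\gamma_t)\ge\sup_t\Theta_{(0,0)}(t)\ge\lim_{t\to-\infty}\Theta_{(0,0)}(t)=k_m\lambda(S^1)$. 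Combining the two inequalities gives $\sup_t\lambda(\gamma_t)=k_m\lambda(S^1)$.

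The main obstacle is the limit computation: I need the convergence $\gamma_t/\sqrt{-t}\to k_m S^1(\sqrt2)$ from Proposition~\ref{prop:torustangentflow} to be strong enough to pass to the limit in the weighted length integral (uniform convergence of the parametrizations $\theta\mapsto\gamma_t(\theta)/\sqrt{-t}$ and of their speeds suffices, and this is transparent from (\ref{eq:toruscurve})), and I must verify that the center $x_0$ and the time $t_0$ genuinely wash out under the parabolic rescaling as indicated above. Beyond that, the proof is just bookkeeping with Huisken's monotonicity formula and the definition of entropy; note in particular that one does not invoke lower semicontinuity of entropy, since that would only reprove the lower bound.
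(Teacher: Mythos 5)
Your proof is correct, but the mechanism for the upper bound is genuinely different from the paper's. You pass to Gaussian density ratios $\Theta_{(x_0,t_0)}(t)$, observe that the entropy at time $t$ is the supremum of these over spacetime centers with $t_0>t$, and then use Huisken's monotonicity \emph{pointwise in the center} together with the computation that every fixed $\Theta_{(x_0,t_0)}$ has limit $k_m\lambda(S^1)$ as $t\to-\infty$ (the center and time shift washing out under the parabolic rescaling, and $\gamma_t/\sqrt{-t}\to k_mS^1(\sqrt{2})$ smoothly). This is essentially the soft general principle that the entropy of an ancient flow is bounded by the Gaussian area of its tangent flow at $-\infty$ when the rescaled flow converges smoothly to a compact limit; it buys generality and avoids explicit estimates, but it leans on the full strength of Proposition \ref{prop:torustangentflow} interpreted with the precise normalization $r(t)^{k_m^2}/\sqrt{-t}\to\sqrt{2}$ (which, as you note, is what upgrades ``tangent flow'' convergence to convergence of the weighted length integrals), and on the Colding--Minicozzi fact that $\lambda(S^1)$ equals the Gaussian area of $S^1(\sqrt2)$ at unit scale. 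The paper instead proves the upper bound by a direct estimate uniform in all dilations and translations at sufficiently negative times: using the explicit parametrization it drops the Gaussian contribution of the small coordinates (at the cost of a factor $\sqrt{k_m^2+C\epsilon^2}$), reduces to the unit-speed circle by the change of variables $k_m\theta\mapsto\theta$ and periodicity, and then invokes monotonicity of the entropy $\lambda(\gamma_t)$ itself; this only requires $r(t)\to\infty$ from Proposition \ref{prop:toruscurve}, not the precise asymptotics of $r$. The lower bounds are the same in spirit (evaluate the Gaussian integral at approximately the shrinker scale and let $t\to-\infty$), with the paper doing it by a fully explicit computation exploiting that $|x|$ is constant on $\gamma_t$, and you doing it via the density ratio centered at the spacetime origin. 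Both routes are sound; yours is shorter and more conceptual, the paper's is more self-contained and independent of the tangent-flow identification.
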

\begin{proof}
First, we bound $\sup_t \lambda(\gamma_t)$ from below. We compute
\[
\frac{1}{\sqrt{4\pi}} \int_{s\gamma_t} e^{-\frac{|x|^2}{4}} = s\sqrt{\pi} \exp\left(-\frac{s^2}{4}\sum_{j=1}^m r^{2k_j^2}\right)\sqrt{\sum_{j=1}^m k_j^2r^{2k_j^2}}.
\]
Setting $\tilde{s} = \sqrt{2}\left(\sum_{j=1}^m r^{2k_j^2}\right)^{-1/2}$, and recalling that $\lim_{t \to -\infty} r(t) = \infty$, we have
\[ 
\sup_t \lambda(\gamma_t) \geq \lim_{t \to -\infty} \frac{1}{\sqrt{4\pi}} \int_{\tilde{s}\gamma_t} e^{-\frac{|x|^2}{4}} = \lambda(S^1) \lim_{t \to -\infty} \frac{\sqrt{\sum_{j=1}^m k_j^2r^{2k_j^2}}}{\sqrt{\sum_{j=1}^m r^{2k_j^2}}} = k_m\lambda(S^1).
\]

Second, we bound $\sup_t \lambda(\gamma_t)$ from above. Let $\epsilon > 0$. By Proposition \ref{prop:toruscurve}, there is a $T_{\epsilon} < 0$ so that $r^{-1} < \epsilon$ and $r \geq 1$ for all $t \leq T_{\epsilon}$. We compute
\[ \begin{split}
\frac{1}{\sqrt{4\pi}}\int_{sr^{-k_m^2}\gamma_t + y} e^{-\frac{|x|^2}{4}}
& \leq \sqrt{k_m^2 + C\epsilon^2}\frac{s}{\sqrt{4\pi}}
\int_0^{2\pi}e^{-\frac{s^2}{4}((y_{2m-1}-\cos(k_m\theta))^2 + (y_{2m}-\sin(k_m\theta))^2)}d\theta\\
& = \sqrt{k_m^2 + C\epsilon^2}\frac{s}{\sqrt{4\pi}}
\int_0^{2\pi}e^{-\frac{s^2}{4}((y_{2m-1}-\cos(\theta))^2 + (y_{2m}-\sin(\theta))^2)}d\theta\\
& \leq \sqrt{k_m^2 + C\epsilon^2}\lambda(S^1),
\end{split} \]
where the second line follows from the change of variables $k_m\theta \mapsto \theta$ and the periodicity of the sinusoidal functions. By the monotonicity of entropy, we conclude that
\[ \sup_t \lambda(\gamma_t) = \lim_{t \to -\infty} \lambda(\gamma_t) \leq \lim_{\epsilon \to 0} \sqrt{k_m^2 + C\epsilon^2}\lambda(S^1) = k_m \lambda(S^1).
\]
Hence, we obtain the desired equality.
\end{proof}

Recall that in \cite[Corollary 0.6]{CM-complexity}, Colding-Minicozzi showed that there are universal constants $C_n$, depending only on the intrinsic dimension $n$, so that if $M_t^n \subset \R^N$ is an ancient solution to the mean curvature flow, then $M_t$ lies in a Euclidean subspace of dimension at most $C_n\sup_t \lambda(M_t)$. As an initial application of the existence of the ancient solution $\gamma_t$, we use the entropy computation in Corollary \ref{cor:torusentropy} to bound the constant $C_1$ in this result. For the torus curve solution $\gamma_t^{(k_1, \hdots, k_m)}$, we obtain the bound $\lambda(S^1)C_1 \geq \frac{2m}{k_m}$, which is maximized when $k_m$ is as small as possible. Since $k_1,\ \hdots,\ k_m$ must be an increasing list of positive integers, we have $k_m \geq m$. Hence, the torus curve solution gives the bound $\lambda(S^1)C_1 \geq 2$, identical to the bound from the shrinking circle solution in $\R^2$.

Since the constant speed sinusoidal functions are natural choices for linearly independent functions with compact images, this example suggests the following conjecture for the sharp constant $C_1$ in \cite[Corollary 0.6]{CM-complexity}.

\begin{conjecture}\label{conj:c1}
The sharp value of $C_1$ is $\frac{2}{\lambda(S^1)}$. In particular, any ancient curve shortening flow $M_t^1 \in \R^N$ that does not lie in a lower dimensional Euclidean subspace satisfies
\[ \sup_t \lambda(M_t) \geq \frac{N}{2}\lambda(S^1). \]
\end{conjecture}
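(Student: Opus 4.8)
The plan is to deduce Conjecture~\ref{conj:c1} from a sharp dimension bound on \emph{linear} caloric functions, following the counting paradigm of \cite{CM-complexity} and \S3 but with optimal constants. Work first with a \emph{compact} ancient curve shortening flow $M_t^1\subset\R^N$ whose affine span is all of $\R^N$ (the noncompact case needs separate input at $-\infty$; see below). The coordinate functions $x_i=\langle x,e_i\rangle$ satisfy $\partial_t x_i=\Delta_{M_t}x_i$, are linear forms, and --- using the diameter bound $\mathrm{diam}(M_t)\le C\sqrt{-t}$ for $-t$ large (a consequence of Huisken monotonicity and $\sup_t\lambda(M_t)<\infty$) together with boundedness of the Gaussian area --- satisfy $I_{x_i}(t)\le C(1-t)$. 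Since the affine span is all of $\R^N$ the $x_i$ are linearly independent, so $N\le\dim\tilde{\mathcal{P}}_1(M_t)$; hence it suffices to prove the sharp estimate $\dim\tilde{\mathcal{P}}_1(M_t)\le\frac{2}{\lambda(S^1)}\sup_t\lambda(M_t)$.

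The next step is to bound $\dim\tilde{\mathcal{P}}_1(M_t)$ by spectral data of the tangent flow $\Sigma_{-\infty}$ at $-\infty$. The plan is to show, via the parabolic frequency monotonicity of \cite{CM-complexity} together with Huisken monotonicity, that a linear caloric function of growth exponent at most $d$ rescales, as $\frac{M_t}{\sqrt{-t}}\to\Sigma_{-\infty}$, to an eigenfunction of $\mathcal{L}_{\Sigma_{-\infty}}$ with eigenvalue in $[0,d/2]$; thus $\dim\tilde{\mathcal{P}}_1(M_t)$ is at most the number $n_{1/2}(\Sigma_{-\infty})$ of eigenvalues of $\mathcal{L}_{\Sigma_{-\infty}}$ in $[0,\tfrac12]$, counted with multiplicity. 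Since $M_t$ is compact, $\Sigma_{-\infty}$ is a closed immersed $1$-dimensional shrinker --- up to multiplicity, a $k$-covered round circle $kS^1(\sqrt2)$ or an Abresch--Langer curve --- with $\lambda(\Sigma_{-\infty})\le\sup_t\lambda(M_t)$. The conjecture then follows from the sharp spectral inequality
\[
n_{1/2}(\Sigma)\ \le\ \frac{2}{\lambda(S^1)}\,\lambda(\Sigma)
\]
for every closed $1$-dimensional shrinker $\Sigma$: for $kS^1(\sqrt2)$ the operator $\mathcal{L}$ has spectrum $\{j^2/(2k^2):j\ge0\}$ with multiplicities $1,2,2,\dots$, so $n_{1/2}=2k=\frac{2}{\lambda(S^1)}\lambda(kS^1(\sqrt2))$ and the inequality is sharp, exactly matching the torus curve $\gamma_t^{(1,\dots,m)}$.

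The hard part is twofold. First, the exponent $d=1$ is borderline --- it is precisely where the coordinate functions sit --- so the spectral transplantation of \S3 must be carried out at this threshold without losing constants, which is already delicate when $\Sigma_{-\infty}$ is a $k$-covered circle and worse when $\Sigma_{-\infty}$ is an Abresch--Langer curve carrying eigenvalues of $\mathcal{L}_\Sigma$ inside the open interval $(0,\tfrac12)$ (the ``non-coordinate'' eigenfunctions flagged in the introduction). Second, proving $n_{1/2}(\Sigma)\le\frac{2}{\lambda(S^1)}\lambda(\Sigma)$ for \emph{all} Abresch--Langer curves is itself a genuine problem in spectral geometry, since neither the low-lying spectrum of $\mathcal{L}_\Sigma$ nor the entropy $\lambda(\Sigma)$ is available in closed form. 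Finally, for noncompact $M_t$ the tangent flow at $-\infty$ may be a multiplicity-one line, for which the naive spectral inequality fails, so that regime must instead be handled by a separate rigidity theorem forcing $M_t$ to lie in a proper Euclidean subspace; reconciling all of these cases into a single argument is the principal obstacle.
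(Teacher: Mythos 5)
This statement is a \emph{conjecture} in the paper: the authors offer no proof, only the lower-bound evidence that the torus curves $\gamma_t^{(1,\hdots,m)}$ force $\lambda(S^1)C_1\geq 2$, so there is no argument of theirs to compare yours against. Your proposal is a strategy sketch rather than a proof, and the gaps in it are genuine, not merely technical. The central missing step is the sharp dimension bound $\dim\tilde{\mathcal{P}}_1(M_t)\leq n_{1/2}(\Sigma_{-\infty})$ for an \emph{arbitrary} ancient curve shortening flow of finite entropy. The machinery of \S3 that you invoke (the transplanted Rayleigh inequality, Lemmas \ref{lem:poincare}--\ref{lem:jtbound}, Theorem \ref{thm:culm}) only functions under strong hypotheses that are assumed, not derived: a quantitative $C^1$-graph rate $(-t)^{1/2-\delta}\epsilon(t)\to 0$, the existence of caloric functions $\psi_i$ shadowing the low eigenfunctions (the ``spectral convergence''), and decay of $J_t(|\Phi|,1)$. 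A general ancient flow with $\sup_t\lambda(M_t)<\infty$ gives you none of these: the rescaled flow need only subconverge, possibly with multiplicity, at no specified rate, and uniqueness of the tangent flow at $-\infty$ is itself not free in this setting. Moreover $d=1$ sits exactly at the eigenvalue-$\tfrac12$ threshold, where the contradiction argument of Theorems \ref{thm:samecodim} and \ref{thm:culm} requires strict inequality ($\delta<\lambda_1$, resp.\ $\delta<\frac{r_1\lambda_1+\lambda_{r_1+1}}{r_1+1}$) and therefore gives nothing at the borderline without a new refinement; even granting everything, Theorem \ref{thm:culm} only yields $\mathrm{codim}(M_t)\leq\mathrm{codim}(\Sigma)+r_1$, not a count of \emph{all} eigenvalues in $(0,\tfrac12]$.

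Beyond that, two further pieces of your outline are themselves open problems, as you partly acknowledge: the inequality $n_{1/2}(\Sigma)\leq\frac{2}{\lambda(S^1)}\lambda(\Sigma)$ for all (multiply covered) Abresch--Langer curves, for which neither the low spectrum of $\mathcal{L}_\Sigma$ nor $\lambda(\Sigma)$ is explicitly computable, and the entire noncompact regime (tangent flow at $-\infty$ a line or with noncompact support), which you defer to an unspecified rigidity theorem; note also that the compact case does not reduce the conjecture, since noncompact ancient curves are exactly where the entropy can be smallest. Finally, a small but real bookkeeping issue: with $n_{1/2}$ defined as the count of eigenvalues in $[0,\tfrac12]$, the $k$-covered circle gives $2k+1$ (including the constant), not $2k$; you need to count only the nonconstant eigenfunctions with eigenvalue in $(0,\tfrac12]$ and check that constants cannot enter $\tilde{\mathcal{P}}_1$ through the contradiction argument. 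In short, your route is a reasonable program --- it is essentially the paradigm the authors themselves suggest --- but as written it does not prove the conjecture, and the paper does not either.
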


As a second application of the existence of the ancient solution $\gamma_t$, we note that the codimension of an ancient solution cannot be bounded by information about its tangent flow as $t\to0$.

\begin{corollary}\label{cor:nobound}
For any integer $m$, there is an ancient curve shortening flow that does not lie in any $(2m-1)$-dimensional Euclidean subspace whose tangent flow as $t\to0$ is the embedded circle.
\end{corollary}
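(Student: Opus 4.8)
The plan is to specialize the torus curve construction of Propositions \ref{prop:toruscurve} and \ref{prop:torustangentflow} to an increasing list of positive integers whose smallest element equals $1$. Concretely, for a fixed integer $m$, I would take the list $k_j = j$ for $j = 1, \ldots, m$, so that $k_1 = 1 < k_2 < \cdots < k_m = m$, and consider the ancient curve shortening flow $\gamma_t^{(1,2,\ldots,m)} \subset \R^{2m}$ produced by Proposition \ref{prop:toruscurve}. Any strictly increasing list starting at $1$ would work equally well; the point of the choice is only that $k_1 = 1$.

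The first step is to record that the codimension requirement is automatic: by the last sentence of Proposition \ref{prop:toruscurve}, the curve $\gamma_t^{(1,2,\ldots,m)}$ does not lie in any $(2m-1)$-dimensional Euclidean subspace. The second step is to identify the tangent flow at $0$: by Proposition \ref{prop:torustangentflow}, the tangent flow of $\gamma_t^{(k_1,\ldots,k_m)}$ at $t = 0$ is the multiplicity $k_1$ circle, and since $k_1 = 1$ this is exactly the embedded circle $S^1(\sqrt{-2t})$. Combining these two observations finishes the proof.

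I expect no genuine obstacle here: the entire content is already packaged in the two cited propositions, and the only observation that must be made explicit is that the multiplicity of the $t \to 0$ tangent flow is governed by the \emph{smallest} integer $k_1$ in the list, which can be set to $1$ while still keeping the list strictly increasing — hence keeping the ambient dimension $2m$ and the non-degeneracy of the codimension. If anything, the proof should simply be a one-sentence invocation of Propositions \ref{prop:toruscurve} and \ref{prop:torustangentflow} with this choice of $(k_1, \ldots, k_m)$.
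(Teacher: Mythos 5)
Your proposal is correct and is essentially the paper's own proof: the authors likewise take the torus curve $\gamma_t^{(k_1,\hdots,k_m)}$ with $k_1=1$, citing Proposition \ref{prop:toruscurve} for the codimension statement and Proposition \ref{prop:torustangentflow} to identify the tangent flow at $t\to 0$ as the multiplicity $k_1=1$ (embedded) circle. No gaps.
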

\begin{proof}
Take the torus curve solution $\gamma^{(k_1, \hdots, k_m)}$ with $k_1 = 1$ and apply Proposition \ref{prop:torustangentflow}.
\end{proof}

Despite the fact that information about an ancient solution as $t \to 0$ \emph{cannot} be used to bound its codimension, there is hope that information about the solution as $t \to -\infty$ \emph{can} bound the codimension. In the next section, motivated by the limiting behavior of the torus curve solution as $t \to -\infty$, we prove codimension bounds for ancient solutions that converge sufficiently rapidly to their tangent flow as $t \to -\infty$.

\section{Codimension bounds by the tangent flow at $-\infty$}
In this section, we prove sharp codimension bounds for ancient mean curvature flows using their limiting behavior as $t \to -\infty$. Recall that in \cite[\S 7]{CM-complexity}, Colding-Minicozzi proved a sharp codimension bound for ancient solutions whose tangent flow at $-\infty$ is a round cylinder. Here, we adapt the techniques of Colding-Minicozzi to handle ancient solutions whose tangent flow at $-\infty$ is an arbitrary \emph{compact} shrinker $\Sigma^n$.

The case of general shrinkers is more delicate than the case of round cylinders. This difficulty arises from the fact that, unlike for cylinders, the lowest nonzero eigenvalue of the drift Laplacian operator $\mathcal{L}_{\Sigma} := \Delta_{\Sigma} - \frac{1}{2}\nabla_{x^T}$ is in general less than $1/2$, corresponding to eigenfunctions that are not coordinate functions of the shrinker. For example, the spectrum of the drift Laplacian of the multiplicity $m$ circle is $\{\frac{k^2}{2m^2}\}_{k \geq 1}$, where each eigenvalue has multiplicity 2. Our key insight to handle this harder setting is to look at flows converging to the tangent flow at $-\infty$ at a rate related to the first nonzero eigenvalue of the drift Laplacian of $\Sigma$.

We recall the inner product notation used by Colding-Minicozzi (see \cite[\S 3]{CM-complexity}). Suppose $M_t^n$ is a mean curvature flow. If $u$ and $v$ are functions on $M_t$, we write
\[ J_t(u, v) := (-4\pi t)^{-\frac{n}{2}}\int_{M_t} uve^{\frac{|x|^2}{4t}} \text{~and~} I_u(t) := J_t(u, u). \]
Namely, $J_t$ is the inner product given by integration against the normalized Gaussian on $M_t$, and $I_u(t)$ is the squared norm of $u$ under this inner product. We freely use the fact that for a caloric function $u$ on a mean curvature flow, the function $I_u(t)$ is monotone non-increasing in time (see \cite[Lemma 3.4]{CM-complexity}). We also recall that $\mathcal{L}_{\Sigma}$ is a self-adjoint operator in the space of Gaussian weighted functions. Therefore many classical properties of self-adjoint operators are valid for $\mathcal{L}_{\Sigma}$; for instance, the existence of eigenfunctions. We refers the reader to \cite[~Section 3]{CM-generic} for further discussion of $\mathcal{L}_{\Sigma}$.

Let $\Sigma^n$ be a compact shrinker. Let $\{\lambda_i\}_{i\geq 1}$ denote the nonzero eigenvalues of the drift Laplacian $\mathcal{L}_{\Sigma} := \Delta_{\Sigma} - \frac{1}{2}\nabla_{x^T}$ in non-decreasing order (counted with multiplicity), and let $\{\overline{\phi}_i\}_{i \geq 1}$ be an orthonormal collection of corresponding eigenfunctions. We use the bar notation in $\overline{\phi}_i$ to denote that the function is on $\Sigma$.

\subsection{Codimension rigidity}
In this subsection, we show that if a rescaled compact ancient solution converges sufficiently rapidly to its limit shrinker as $t \to -\infty$, then the solution lies in the same subspace as the limit shrinker.

For a fixed $t < 0$, let $M^n \subset \R^N$ be a submanifold so that $\frac{M}{\sqrt{-t}}$ is a graph over $\Sigma$. Then a function $u$ on $M$ can be transplanted to a function $\overline{u}$ on $\Sigma$, and vice versa. In particular, we let $\phi_i$ denote the transplantation of the eigenfunction $\overline{\phi}_i$ on $\Sigma$ to $M$.

As outlined in the introduction, we proceed by bounding the Gaussian norm of caloric functions from above and below to obtain a contradiction.

For the upper bound, we begin by adapting the Poincar\'e inequality from \cite[Lemma 7.14]{CM-complexity} to the setting of an arbitrary compact shrinker.

\begin{lemma}\label{lem:poincare}
Given $t < 0$ and $\mu > 0$, there is an $\epsilon_{\mu} > 0$ so that the following holds. Let $M^n \subset \R^N$ be a compact immersed submanifold with $\lambda(M) \leq \lambda_0 < \infty$, so that $\frac{M}{\sqrt{-t}}$ is an $\epsilon_{\mu}$ $C^1$-graph over the compact shrinker $\Sigma$. If $u$ satisfies $\int_{M} ue^{\frac{|x|^2}{4t}} = 0$, then
\begin{equation}\label{eq:poincare1} (1-\mu)(-4\pi t)^{-\frac{n}{2}}\int_{M} u^2e^{\frac{|x|^2}{4t}} \leq \frac{-t}{\lambda_1}(-4\pi t)^{-\frac{n}{2}}\int_{M} |\nabla u|^2e^{\frac{|x|^2}{4t}}. \end{equation}
\end{lemma}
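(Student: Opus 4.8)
The plan is to reduce the inequality on $M$ to the corresponding Poincaré (Rayleigh) inequality on the fixed shrinker $\Sigma$, where it is exact. The key point is that if $\overline u$ is a function on $\Sigma$ with $\int_\Sigma \overline u\, e^{-|x|^2/4} = 0$, then $\overline u$ is orthogonal to the constant function (the $0$-eigenfunction of $\mathcal L_\Sigma$) in the Gaussian $L^2$ inner product, so Rayleigh's inequality for the first nonzero eigenvalue $\lambda_1$ gives
\[
\lambda_1 \int_\Sigma \overline u^2\, e^{-|x|^2/4} \leq \int_\Sigma |\nabla \overline u|^2\, e^{-|x|^2/4}.
\]
After rescaling $M = \sqrt{-t}\,\tilde M$ so that $\tilde M$ is an $\epsilon_\mu$ $C^1$-graph over $\Sigma$, a change of variables converts the displayed inequality \eqref{eq:poincare1} into an inequality on $\tilde M$ of the same shape but with $-t$ absorbed; so it suffices to prove: if $\tilde M$ is a sufficiently fine $C^1$-graph over $\Sigma$ and $\int_{\tilde M} u\, e^{-|x|^2/4} = 0$, then $(1-\mu)\int_{\tilde M} u^2 e^{-|x|^2/4} \le \frac{1}{\lambda_1}\int_{\tilde M} |\nabla u|^2 e^{-|x|^2/4}$.

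First I would set up the transplantation carefully: write $\tilde M$ as the normal graph $x \mapsto x + \Psi(x)$ over $\Sigma$ (where $\Psi$ is $C^1$-small), and transplant $u$ on $\tilde M$ to $\overline u(x) := u(x+\Psi(x))$ on $\Sigma$. Because the graph function and its derivative are controlled by $\epsilon_\mu$, the induced metric, the volume form, the Gaussian weight $e^{-|x|^2/4}$, and the gradient comparison $|\nabla u|^2$ vs.\ $|\nabla \overline u|^2$ all differ by multiplicative factors of the form $1 + O(\epsilon_\mu)$; I would record these as explicit comparison estimates. The one subtlety is the mean-value hypothesis: the condition $\int_{\tilde M} u\, e^{-|x|^2/4} = 0$ transplants to $\int_\Sigma \overline u\, e^{-|x|^2/4} = 0$ only up to an error of size $O(\epsilon_\mu)\|\overline u\|_{L^2}$. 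So I would replace $\overline u$ by $\overline u - c$ where $c$ is chosen so that $\overline u - c$ has exactly zero Gaussian mean on $\Sigma$; then $|c| \le C\epsilon_\mu \|\overline u\|_{L^2(\Sigma)}$, apply Rayleigh's inequality to $\overline u - c$ (note $\nabla(\overline u - c) = \nabla\overline u$), and absorb the resulting $c$-terms into the $(1-\mu)$ slack. Chaining: $\int_{\tilde M} u^2 w \le (1+O(\epsilon_\mu))\int_\Sigma \overline u^2 w \le (1+O(\epsilon_\mu))\big(\tfrac{1}{\lambda_1}\int_\Sigma |\nabla\overline u|^2 w + (\text{error from }c)\big) \le \tfrac{1+O(\epsilon_\mu)}{\lambda_1}\int_{\tilde M}|\nabla u|^2 w$, and choosing $\epsilon_\mu$ small enough that $1 + O(\epsilon_\mu) \le (1-\mu)^{-1}$ finishes it.

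The main obstacle is the bookkeeping in the mean-value correction and making sure the errors are genuinely \emph{multiplicative} in $\|\overline u\|_{L^2}$ (so that they can be absorbed without any absolute constant depending on $u$), together with a uniform lower bound $\int_\Sigma |\Sigma|$-type nondegeneracy so that the spectral gap $\lambda_1$ and the Poincaré constant on $\Sigma$ are honest fixed constants — these depend only on $\Sigma$, which is fixed, so this is really just careful estimation rather than a conceptual difficulty. The entropy bound $\lambda(M) \le \lambda_0$ is used only to guarantee that the Gaussian integrals on $\tilde M$ are finite and that the graph representation is uniform; I would invoke it at the start to justify that all the integrals in sight converge. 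The dependence of $\epsilon_\mu$ and $\delta_\mu$ on $\mu$ comes out of the condition $1+O(\epsilon_\mu) \le (1-\mu)^{-1}$; I expect $\delta_\mu$ plays no essential role in \eqref{eq:poincare1} itself but is recorded for later use (e.g.\ a companion $C^0$-closeness threshold), so I would state it as whatever auxiliary smallness the transplantation estimates require.
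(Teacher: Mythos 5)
Your proposal is correct and follows essentially the same route as the paper: transplant $u$ to $\Sigma$ with multiplicative $1+O(\epsilon_\mu)$ errors in the norm, mean, and Dirichlet energy (with the entropy bound $\lambda_0$ entering the Cauchy--Schwarz control of the transplanted mean), correct the imperfect zero-mean condition by subtracting the Gaussian average on $\Sigma$ (your constant $c$ is exactly the paper's $\left(\int_\Sigma \overline{u}\,e^{-|x|^2/4}\right)/\lambda(\Sigma)$ up to normalization), apply Rayleigh's inequality for $\lambda_1$, and absorb all errors by taking $\epsilon_\mu$ small; your observation that $\delta_\mu$ is not needed here also matches the paper, where it only enters the later variant of the lemma.
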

\begin{proof}
For ease of notation, we write $\fint_{\Sigma} \overline{v} := (4\pi)^{-\frac{n}{2}}\int_{\Sigma} \overline{v}e^{-\frac{|x|^2}{4}}$ for a function $\overline{v}$ on $\Sigma$. We also write $J_t(u,v)$ and $I_u(t)$ to denote the time $t$ inner product and squared norm on $M$.

The crux of the proof is the well-known Rayleigh inequality, which says that if a function $\overline{v}$ on $\Sigma$ is orthogonal to the first $l$ eigenfunctions of $\mathcal{L}_{\Sigma}$, then
\begin{equation}\label{eq:rayleighineq} \fint_{\Sigma} \overline{v}^2 \leq \frac{1}{\lambda_{l+1}}\fint_{\Sigma} |\nabla \overline{v}|^2.
\end{equation}

First, we state the relevant consequences of the $C^1$-closeness of $\frac{M}{\sqrt{-t}}$ and $\Sigma$. By straightforward calculations similar to \cite[(7.21-7.23)]{CM-complexity}, we have
\begin{equation}\label{eq:transplantnorm}
\left|I_u(t) - \fint_{\Sigma}\overline{u}^2 \right| \leq O(\epsilon_{\mu})I_u(t),
\end{equation}
\begin{equation}\label{eq:transplantip1}
\left(\fint_{\Sigma} \overline{u} \right)^2 = \left|J_t(u, 1) - \fint_{\Sigma} \overline{u} \right|^2 \leq O(\epsilon_{\mu}^2)\lambda_0I_u(t),
\end{equation}
\begin{equation}\label{eq:transplantenergy}
\left|I_{|\nabla u|}(t) - \frac{1}{-t}\fint_{\Sigma} |\nabla \overline{u}|^2 \right| \leq O(\epsilon_{\mu})I_{|\nabla u|}(t),
\end{equation}
where we recall that $\overline{u}$ denotes the transplantation of $u$ to $\Sigma$.

Since $\overline{u}$ is not exactly orthogonal to $1$, we have to orthogonally project before applying Rayleigh's inequality (\ref{eq:rayleighineq}). We obtain
\[
\fint_{\Sigma} \left(\overline{u} - \frac{\fint_{\Sigma} \overline{u}}{\lambda(\Sigma)}\right)^2 \leq \frac{1}{\lambda_{1}}\fint_{\Sigma} \left|\nabla \overline{u}\right|^2.
\]
For the left hand side, we compute (using the fact that $\lambda(\Sigma) \geq 1$)
\[ \begin{split}
\fint_{\Sigma} \left(\overline{u} - \frac{\fint_{\Sigma} \overline{u}}{\lambda(\Sigma)}\right)^2
& = \fint_{\Sigma} \overline{u}^2 -\frac{\left(\fint_{\Sigma} \overline{u}\right)^2}{\lambda(\Sigma)} \geq \fint_{\Sigma} \overline{u}^2 -\left(\fint_{\Sigma} \overline{u}\right)^2.
\end{split} \]
Hence, we obtain
\begin{equation}\label{eq:rayleigh2}
\fint_{\Sigma} \overline{u}^2 \leq \left(\fint_{\Sigma} \overline{u}\right)^2 + \frac{1}{\lambda_{1}}\fint_{\Sigma} |\nabla \overline{u}|^2.
\end{equation}

Combining (\ref{eq:transplantnorm})-(\ref{eq:transplantenergy}) with (\ref{eq:rayleigh2}), we obtain
\[ \begin{split}
(1 - O(\epsilon_{\mu}))I_u(t)
&\leq \fint_{\Sigma} \overline{u}^2
\leq \left(\fint_{\Sigma} \overline{u}\right)^2 + \frac{1}{\lambda_{1}}\fint_{\Sigma} |\nabla \overline{u}|^2 \leq O(\epsilon_{\mu}^2)I_u(t) + \frac{-t}{\lambda_{1}}(1+O(\epsilon_{\mu}))I_{|\nabla u|}(t).
\end{split} \]
Taking $\epsilon_{\mu}$ sufficiently small, we obtain (\ref{eq:poincare1}).
\end{proof}

Using Lemma \ref{lem:poincare}, we adapt the caloric functions norm upper bound \cite[Lemma 7.30]{CM-complexity} to our setting.

\begin{lemma}\label{lem:upperbound1}
Let $M_t^n\subset \R^N$ be a mean curvature flow defined for $t_1 \leq t \leq t_2 < 0$ with $\lambda(M_t) \leq \lambda_0 < \infty$. Given $\mu \in (0, 1/2)$, there is an $\epsilon_{\mu} > 0$ independent of $t \in [t_1, t_2]$ so that if
\begin{itemize}
\item $\frac{M_t}{\sqrt{-t}}$ is an $\epsilon_{\mu}$ $C^1$-graph over the compact shrinker $\Sigma^n$ for $t \in [t_1, t_2]$,
\item $\partial_t u = \Delta_{M_t} u$, $I_u(t_1) = 1$, and $J_{t_1}(u, 1) = 0$,
\end{itemize}
then
\[ I_u(t_2) \leq \left(\frac{t_1}{t_2}\right)^{2\lambda_1(\mu-1)} + O(\epsilon_{\mu}). \]
\end{lemma}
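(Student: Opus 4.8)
The plan is to set up a differential inequality for $I_u(t)$ and integrate it, following the strategy of \cite[Lemma 7.30]{CM-complexity} but using Lemma \ref{lem:poincare} in place of the cylinder Poincar\'e inequality. First I would recall the standard identity (see \cite[\S 3]{CM-complexity}) for a caloric function $u$ on a mean curvature flow: differentiating $I_u(t) = (-4\pi t)^{-n/2}\int_{M_t} u^2 e^{|x|^2/(4t)}$ gives
\[
I_u'(t) = \frac{1}{2t}\,(-4\pi t)^{-\frac{n}{2}}\int_{M_t} \left(\frac{n}{(-t)}u^2 - \frac{|x^\perp|^2}{2(-t)^2}u^2 + \text{(cross terms from }\partial_t u)\right)e^{\frac{|x|^2}{4t}},
\]
but the clean outcome one actually uses is the drift-Laplacian computation: $I_u'(t) = -2\,(-4\pi t)^{-n/2}\int_{M_t}\left(|\nabla u|^2 + u\,\mathcal{L}_t u\right)e^{|x|^2/(4t)} + \text{lower order}$, which after using $\partial_t u = \Delta_{M_t}u$ reduces to $-2 I_{|\nabla u|}(t)$ up to the Gaussian weight bookkeeping. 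More precisely I would cite the exact form from \cite{CM-complexity} that $\frac{d}{dt}\log I_u(t) = -\frac{2 I_{|\nabla u|}(t)}{I_u(t)}$ modulo the $(-t)$-rescaling factors, so that the whole game is to lower-bound $I_{|\nabla u|}(t)/I_u(t)$.

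Next I would verify that the orthogonality hypothesis $J_{t_1}(u,1)=0$ is (approximately) propagated forward: since $u$ is caloric and $1$ is caloric, $J_t(u,1)$ is \emph{constant} in $t$ (this is a standard fact, \cite[Lemma 3.2 or 3.4]{CM-complexity}), so $J_t(u,1) = 0$ for all $t \in [t_1,t_2]$. Hence at every time $t$ the function $u$ on $M_t$ satisfies exactly the hypothesis $\int_{M_t} u e^{|x|^2/(4t)} = 0$ of Lemma \ref{lem:poincare}. Applying Lemma \ref{lem:poincare} with the parameter $\mu$ (using the same $\epsilon_\mu$, possibly shrinking it), we get, for each $t$,
\[
I_{|\nabla u|}(t) \;\geq\; \frac{\lambda_1}{-t}\,(1-\mu)\,I_u(t).
\]
Plugging this into the log-derivative identity yields
\[
\frac{d}{dt}\log I_u(t) \;\leq\; -\,\frac{2\lambda_1(1-\mu)}{-t}\;=\;\frac{2\lambda_1(1-\mu)}{t},
\]
valid (at least) up to an $O(\epsilon_\mu)$ error absorbed into the constant; integrating from $t_1$ to $t_2$ gives $\log I_u(t_2) - \log I_u(t_1) \leq 2\lambda_1(1-\mu)\log(t_2/t_1)$, i.e. $I_u(t_2) \leq I_u(t_1)\,(t_2/t_1)^{2\lambda_1(1-\mu)} = (t_1/t_2)^{2\lambda_1(\mu-1)}$ since $I_u(t_1)=1$ and $t_2/t_1 \in (0,1)$ (note $t_1 < t_2 < 0$, so $t_2/t_1 \in (0,1)$ and the exponent $2\lambda_1(1-\mu)>0$ makes this a number less than $1$, as expected for a non-increasing quantity). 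Tracking the $O(\epsilon_\mu)$ errors from Lemma \ref{lem:poincare} through the integration—they contribute an additive $O(\epsilon_\mu)$ after using that $I_u$ is monotone non-increasing hence bounded by $1$ on the whole interval—gives the stated $I_u(t_2) \leq (t_1/t_2)^{2\lambda_1(\mu-1)} + O(\epsilon_\mu)$.

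The main obstacle I anticipate is making the error analysis uniform: Lemma \ref{lem:poincare} produces a multiplicative $(1-\mu)$ \emph{and} the various $O(\epsilon_\mu)$ terms that came from the $C^1$-graph estimates \eqref{eq:transplantnorm}--\eqref{eq:transplantenergy}, and one must be careful that integrating a differential inequality of the form $(\log I_u)' \leq \frac{2\lambda_1(1-\mu)}{t} + \frac{C\epsilon_\mu}{-t}$ over an interval of potentially unbounded length $|\log(t_1/t_2)|$ does not blow up the error. The resolution—and this is the delicate point—is that one should not integrate the $\epsilon_\mu$ term naively; instead one uses that $I_u(t) \le I_u(t_1) = 1$ throughout (monotonicity), feeds this \emph{additively} rather than multiplicatively into the Poincar\'e step, or alternatively chooses $\epsilon_\mu$ small depending on the ratio in question—but since $t_1, t_2$ are not fixed this must be done via a monotonicity/barrier argument as in \cite[Lemma 7.30]{CM-complexity}. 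I would handle this exactly as Colding--Minicozzi do: split into the regime where $I_u(t_2)$ is already $\leq (t_1/t_2)^{2\lambda_1(\mu-1)}$ (nothing to prove) and the complementary regime where $I_u$ stays bounded below by a definite multiple of its endpoint value, on which the relative error $\epsilon_\mu I_u / I_u$ is genuinely $O(\epsilon_\mu)$ and integrates to $O(\epsilon_\mu)$ because the growth exponent controls the interval length. A second, more minor technical point is confirming that the $\epsilon_\mu$ from Lemma \ref{lem:poincare} can be taken uniformly over all $t \in [t_1,t_2]$ given that $\frac{M_t}{\sqrt{-t}}$ is an $\epsilon_\mu$-graph over the \emph{same} fixed $\Sigma$ for every $t$—this is immediate since Lemma \ref{lem:poincare}'s $\epsilon_\mu$ depends only on $\mu$, $\Sigma$, and $\lambda_0$, none of which vary with $t$.
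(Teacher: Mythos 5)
Your overall skeleton (a differential inequality coming from the weighted monotonicity formula combined with the Poincar\'e inequality of Lemma \ref{lem:poincare}, then integration from $t_1$ to $t_2$) matches the paper's, but there is a genuine gap at the step where you assert that $J_t(u,1)$ is constant in $t$ because $u$ and $1$ are both caloric. This is false for a general mean curvature flow: by the weighted monotonicity formula, for caloric $u$ one has $\frac{d}{dt}J_t(u,1) = -J_t(u|\Phi|^2,1)$ with $\Phi = \vec{H} + \frac{x^{\perp}}{2t}$, and this vanishes only for a self-shrinking solution (what \cite[Lemma 3.4]{CM-complexity} gives is monotonicity of $I_u$, not conservation of Gaussian inner products). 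Consequently, orthogonality to the constants is \emph{not} propagated from $t_1$ to later times, and Lemma \ref{lem:poincare} cannot be applied to $u$ itself at $t>t_1$. The paper handles exactly this point: it projects at each time, setting $v := u - J_t(u,1)/I_1(t)$, applies Lemma \ref{lem:poincare} to $v$, and then must bound the resulting extra term $J_t^2(u,1)/I_1(t)$ by $O(\epsilon_\mu)$; this is where \cite[Lemma 7.1]{CM-complexity} and \cite[(7.23)]{CM-complexity} (i.e.\ the $\epsilon_\mu$ $C^1$-closeness to $\Sigma$, which controls the drop of the Gaussian area and hence the drift of $J_t(u,1)$ away from zero) enter. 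In your write-up the additive $O(\epsilon_\mu)$ is instead attributed to error terms in Lemma \ref{lem:poincare}, but that lemma's conclusion (\ref{eq:poincare1}) has no additive error---those errors were already absorbed into the factor $(1-\mu)$---so under your constancy claim nothing would generate the $O(\epsilon_\mu)$ term at all; the missing control of $J_t(u,1)$ is precisely the content you need to supply.

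A smaller point: the integration worry you raise (that a relative error $C\epsilon_\mu/(-t)$ in $(\log I_u)'$ could integrate to $\epsilon_\mu\log(t_1/t_2)$ and blow up over long time intervals) is real for the log-derivative formulation, but the paper sidesteps it without any regime-splitting or barrier argument: one writes the differential inequality for the quantity $(-t)^{2\lambda_1(\mu-1)}I_u(t)$, namely $\bigl((-t)^{2\lambda_1(\mu-1)}I_u\bigr)' \leq O(\epsilon_\mu)(-t)^{2\lambda_1(\mu-1)-1}$; since the exponent $2\lambda_1(\mu-1)$ is negative, the right-hand side integrates to $O(\epsilon_\mu)(-t_2)^{2\lambda_1(\mu-1)}$ with no logarithmic factor, and dividing by $(-t_2)^{2\lambda_1(\mu-1)}$ yields the clean additive $O(\epsilon_\mu)$ uniformly in the ratio $t_1/t_2$.
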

\begin{proof}
We show that we have the differential inequality
\begin{equation}\label{eq:diffineq}
((-t)^{2\lambda_1(\mu-1)}I_u)' \leq O(\epsilon_{\mu})(-t)^{2\lambda_1(\mu-1)-1}.
\end{equation}
Integrating from $t_1$ to $t_2$ and using $I_u(t_1) = 1$, we get
\[ \begin{split} (-t_2)^{2\lambda_1(\mu-1)}I_u(t_2) \leq (-t_1)^{2\lambda_1(\mu-1)} + O(\epsilon_{\mu})(-t_2)^{2\lambda_1(\mu-1)}, \end{split} \]
which gives the desired inequality after dividing by $(-t_2)^{2\lambda_1(\mu-1)}$. Hence, it suffices to show (\ref{eq:diffineq}).

To apply Lemma \ref{lem:poincare} at time $t$, we need a function that is $J_t$-orthogonal to $1$. Hence, we define the function $v := u - \frac{J_t(u, 1)}{I_1(t)}$, which satisfies this property. 
Hence, by Lemma \ref{lem:poincare}, we have
\[ (1-\mu)I_v(t) \leq \frac{-t}{\lambda_1} I_{|\nabla v|}(t). \]
By the definition of $v$ we have
\[
I_u(t) = I_v(t) + \frac{J_t^2(u, 1)}{I_1(t)}~~~ \text{~and~}~~~ |\nabla v| = |\nabla u|.
\]
Therefore, by the weighted monotonicity formula (see \cite[Theorem 4.13]{ecker} and \cite[(3.6)]{CM-complexity}), we have
\[ (1-\mu)I_u(t) \leq \frac{t}{2\lambda_1}I_u'(t) + \frac{J_t^2(u, 1)}{I_1(t)}.
\]
By \cite[Lemma 7.1]{CM-complexity} and \cite[(7.23)]{CM-complexity}, the rightmost term is bounded by $O(\epsilon_{\mu})$. Multiplying by $2\lambda_1(-t)^{2\lambda_1(\mu-1)-1}$, we obtain (\ref{eq:diffineq}).
\end{proof}

Now, we bound caloric function norms from below. This part of the argument follows \cite[Lemma 3.9]{CM-complexity}.

Let $d$ be a nonnegative real number. Let $u_0 \equiv 1,\ u_1, \hdots, u_p \in \mathcal{P}_{d}(M_t)$ be linearly independent in spacetime. Using Gram-Schmidt, we orthogonalize these functions with respect to the inner products $J_t$.

Choose $t_0 < 0$. Let $w_{0, t_0} := u_0$. For $i = 1, \hdots, p$, we choose constants $\lambda_{j,i}(t_0) \in \R$ so that the function
\begin{equation}\label{eq:ortho} w_{i, t_0} := u_i - \sum_{j=0}^{i-1} \lambda_{j,i}(t_0) u_j
\end{equation}
is $J_{t_0}$-orthogonal to $u_0, \hdots, u_{i-1}$. Let $f_i(t_0):= I_{w_{i, t_0}}(t_0)$.

In this setup, Colding-Minicozzi proved the following caloric function norm lower bound.

\begin{lemma}{\cite[Lemma 3.9]{CM-complexity}}\label{lem:lowerbound}
Given $\mu > 0$, $\Omega > 1$, there is a sequence $m_q \to \infty$ so that the functions $v_i$ defined by
\[ v_i := w_{i,-\Omega^{m+1}}/\sqrt{f_i(-\Omega^{m+1})} \]
satisfy
\begin{equation}\label{eq:lem39} J_{-\Omega^{m+1}}(v_i, v_j) = \delta_{ij} \text{~and~} \sum_{i = 1}^p I_{v_i}(-\Omega^{m}) \geq p\Omega^{-\mu - d}. \end{equation}
\end{lemma}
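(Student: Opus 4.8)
The plan is to reproduce the scheme of \cite[Lemma 3.9]{CM-complexity}: package the joint behaviour of $u_0,\dots,u_p$ into a single Gram determinant, bound it above using the growth hypotheses, and then pigeonhole over the scales $-\Omega^m$ to locate the good indices $m_q$. First I would set $V(t):=\det\big[(J_t(u_a,u_b))_{0\le a,b\le p}\big]$. Since the $u_i$ are linearly independent caloric functions and $J_t$ is a genuine inner product on caloric functions (a caloric function vanishing on one time slice vanishes identically), $V(t)>0$ for all $t<0$. Each Gram--Schmidt replacement $u_i\mapsto w_{i,t_0}=u_i-\sum_{j<i}\lambda_{j,i}(t_0)u_j$ is a unipotent triangular change of basis, so $V(t)=\det\big[(J_t(w_{a,t_0},w_{b,t_0}))_{a,b}\big]$ for any reference time $t_0$; evaluating at $t=t_0$, where the $w_{i,t_0}$ are mutually $J_{t_0}$-orthogonal, gives $V(t_0)=\prod_{i=0}^p f_i(t_0)$.

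Next I would record two ingredients. The first is a deterministic upper bound: $f_i(t)=I_{w_{i,t}}(t)\le I_{u_i}(t)$ by Pythagoras (as $w_{i,t}$ is $u_i$ minus a $J_t$-projection), and for $u_i\in\mathcal{P}_{d_i}$ the pointwise growth, the entropy bound $\lambda(M_t)\le\lambda_0$, and the moment estimates of \cite[Lemma 7.1]{CM-complexity} give $I_{u_i}(t)\le C_i(1-t)^{d_i}$ --- which for the modified space $\tilde{\mathcal{P}}_{d_i}$ is simply the definition. Since $f_0(t)=I_1(t)\le\lambda_0$ (so $d_0=0$), this yields $V(t)\le C(1-t)^{D}$ with $D:=\sum_{i=1}^p d_i$. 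The second ingredient is Hadamard's inequality applied to the positive-definite matrix $(J_{-\Omega^m}(w_{a,-\Omega^{m+1}},w_{b,-\Omega^{m+1}}))$, namely $\prod_{i=0}^p I_{w_{i,-\Omega^{m+1}}}(-\Omega^m)\ge V(-\Omega^m)$. Dividing off the $i=0$ factor $I_1$ and using monotonicity of $I_1$ in $t$ (so $I_1(-\Omega^{m+1})\ge I_1(-\Omega^m)$), these combine to
\[
\prod_{i=1}^p I_{v_i}(-\Omega^m)=\prod_{i=1}^p\frac{I_{w_{i,-\Omega^{m+1}}}(-\Omega^m)}{f_i(-\Omega^{m+1})}\ge\frac{V(-\Omega^m)}{V(-\Omega^{m+1})}.
\]

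Then comes the pigeonhole. If $V(-\Omega^m)/V(-\Omega^{m+1})<\Omega^{-p\mu-D}$ held for every $m\ge M_0$, telescoping would force $V(-\Omega^{M_0+k})>\Omega^{k(p\mu+D)}V(-\Omega^{M_0})$, contradicting $V(-\Omega^{M_0+k})\le C\big(1+\Omega^{M_0+k}\big)^{D}\le C'\Omega^{(M_0+k)D}$ once $k$ is large, since $\Omega^{kp\mu}\to\infty$. Hence $V(-\Omega^m)/V(-\Omega^{m+1})\ge\Omega^{-p\mu-D}$ for infinitely many $m$, which I enumerate as a sequence $m_q\to\infty$. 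For such $m$, AM--GM gives $\sum_{i=1}^p I_{v_i}(-\Omega^m)\ge p\big(\prod_{i=1}^p I_{v_i}(-\Omega^m)\big)^{1/p}\ge p\,\Omega^{-\mu-D/p}=p\,\Omega^{-\mu-\frac1p\sum d_i}$, which is the claimed lower bound; and $J_{-\Omega^{m+1}}(v_i,v_j)=\delta_{ij}$ is immediate from the Gram--Schmidt orthogonality together with the normalization by $\sqrt{f_i(-\Omega^{m+1})}$.

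The bookkeeping is elementary linear algebra plus the monotonicity of $I_u(t)$, so I expect the only genuinely delicate point to be the deterministic bound $I_{u_i}(t)\le C_i(1-t)^{d_i}$: turning pointwise polynomial growth of a function in $\mathcal{P}_{d_i}$ into a bound on its Gaussian-weighted norm requires a uniform grip on the moments $(-4\pi t)^{-n/2}\int_{M_t}|x|^{2d_i}e^{|x|^2/4t}$, which is exactly where $\lambda(M_t)\le\lambda_0$ enters; phrasing the result via $\tilde{\mathcal{P}}_{d_i}$, as in the codimension bounds to follow, makes this step vacuous.
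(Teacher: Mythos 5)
Your argument is correct and is essentially the proof the paper points to, namely that of \cite[Lemma 3.9]{CM-complexity}: orthogonalize at time $-\Omega^{m+1}$, compare Gaussian norms at $-\Omega^{m}$, bound the relevant products polynomially via $f_i(t)\le I_{u_i}(t)\le C_i(1-t)^{d_i}$ (the entropy/moment estimates, or simply the definition for $\tilde{\mathcal{P}}_{d_i}$), pigeonhole over the scales $-\Omega^m$, and finish with AM--GM. Packaging the scale comparison through the Gram determinant and Hadamard's inequality, rather than through the $J_t$-minimizing property of the Gram--Schmidt remainders (which gives $I_{w_{i,-\Omega^{m+1}}}(-\Omega^m)\ge f_i(-\Omega^m)$ and lets one pigeonhole on $\prod_i f_i(-\Omega^m)$ directly), is only a cosmetic variation of the same scheme.
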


In their proof of \cite[Lemma 3.9]{CM-complexity}, Colding-Minicozzi referenced prior work (\cite[Proposition 4.16]{cm97}) to establish that the functions $f_i$ are non-increasing and have growth bounded by $C_i(1-t)^d$. While the proof of \cite[Proposition 4.16]{cm97} was written for harmonic functions, the proof in the parabolic setting is formally identical.

Now, we are equipped with the tools to prove the rigidity of the codimension of ancient solutions.

\begin{proof}[Proof of Theorem \ref{thm:samecodim}]
Without loss of generality, suppose $\overline{x}_1, \hdots, \overline{x}_k$ are linearly independent and the only nonzero coordinate functions of $\Sigma$. By the convergence assumption, $M_t$ possesses coordinate functions $x_1, \hdots, x_k$ so that $\frac{x_i}{\sqrt{-t}}$ converges to $\overline{x}_i$ in the $C^1$ norm.

Moreover, the assumption of $C^1$ convergence at rate $\epsilon(t)$ implies that
\[ \Big|\frac{x_i}{\sqrt{-t}} - \overline{x_i}\Big| = \Big|\frac{x_i}{\sqrt{-t}}\Big| \leq \epsilon(t) \]
for all $i > k$. Then by the decay assumption for $\epsilon(t)$, we have $x_i \in \mathcal{P}_{2\delta}(M_t)$ for all $i > k$. Since the constant functions lie in $\mathcal{P}_{2\delta}(M_t)$, it suffices to show that $\mathrm{dim}~\mathcal{P}_{2\delta}(M_t) = 1$.
Suppose for the sake of contradiction that $u_0 \equiv 1,\ u_1 \in \mathcal{P}_{2\delta}(M_t)$ are linearly independent.

First, we apply the caloric function lower bound. Let $\Omega > 1$ and $\mu > 0$. Applying the orthonormalization procedure in (\ref{eq:ortho}) and Lemma \ref{lem:lowerbound}, there is a sequence $m_q \to \infty$ so that the functions $\{v_0 \equiv c,\ v_1\}$ satisfy
\begin{equation}\label{eq:lowerboundthm1}
J_{-\Omega^{m_q+1}}(v_i, v_j) = \delta_{ij} \text{~and~} I_{v_1}(-\Omega^{m_q}) \geq \Omega^{-2{\delta} - \mu}.
\end{equation}

Second, we apply the caloric function upper bound. By Lemma \ref{lem:upperbound1}, we have
\begin{equation}\label{eq:upperboundthm1}
I_{v_1}(-\Omega^{m_q}) \leq \Omega^{2\lambda_1(\mu - 1)} + C\epsilon(-\Omega^{m_q}).
\end{equation}

Together, (\ref{eq:lowerboundthm1}) and (\ref{eq:upperboundthm1}) imply
\begin{equation}\label{eq:ineqthm1}
\Omega^{-2{\delta} - \mu} \leq \Omega^{2\lambda_1(\mu - 1)} + C\epsilon(-\Omega^{m_q}).
\end{equation}
Since $\epsilon$ tends to $0$ as $t \to -\infty$, the second term on the right hand side is arbitrarily small for $q$ large. Since $\delta < \lambda_1$, we can take $\mu$ sufficiently small so that $\Omega^{-2{\delta} - \mu} > \Omega^{2\lambda_1(\mu-1)}$, which contradicts (\ref{eq:ineqthm1}).
\end{proof}

\begin{remark}
The torus curve solution indicates that the convergence assumption in Theorem \ref{thm:samecodim} is essentially sharp. For example, the torus curve solution $\gamma_t^{(1,2)} \subset \R^4$ satisfies the decay assumption $(-t)^{\frac{1}{2} - \lambda_1}\epsilon(t) \leq C < \infty$ and has higher codimension, where $\lambda_1$ of the limit shrinker $2S^1(\sqrt{2})$ is $\frac{1}{8}$.
\end{remark}

\subsection{Application: Rigidity of the circle with multiplicity}

By Theorem \ref{thm:samecodim}, if $M_t$ is an ancient curve shortening flow whose rescaled flow converges sufficiently quickly to the multiplicity $m$ circle $mS^1(\sqrt{2})$ as $t \to -\infty$, then $M_t$ is planar. Combining this fact with an entropy bound from \cite{baldauf}, we obtain the following rigidity result.

\begin{proof}[Proof of Corollary \ref{cor:circlerigidity}]
By the convergence assumption, there is a $T < 0$ so that $M_t$ has turning number $m$ for all $t \leq T$. By the type I singularity assumption, \cite[Theorem A]{baldauf} implies that $M_T$ has entropy $\lambda(M_T) \geq m\lambda(S^1)$. Since $\frac{M_t}{\sqrt{-t}}$ converges to $mS^1(\sqrt{2})$ as a $C^1$ graph as $t \to -\infty$, we have $\sup_t \lambda(M_t) = m\lambda(S^1)$. Then by the monotonicity of entropy, we have $\lambda(M_t) \equiv m\lambda(S^1)$ for all $t$, which implies that $M_t \equiv mS^1(\sqrt{-2t})$.
\end{proof}

\begin{remark}
The type I assumption is necessary. In fact, applying the construction in \cite[Theorem 1.6]{ChMa} to the Gaussian area functional in $\R^2$ gives a nontrivial rescaled ancient curve shortening flow $M_t/\sqrt{-t}$ converging to $mS^1(\sqrt{2})$ exponentially fast as $t \to -\infty$, where $M_t/\sqrt{-t}$ realizes an unstable perturbation of $mS^1(\sqrt{2})$ (we note that although their construction was developed for minimal surfaces in a closed manifold, it should not be too hard to generalize to closed minimal surfaces in a noncompact manifold).

Therefore, after undoing the rescaling, $M_t$ is not identical to $mS^1(\sqrt{-2t})$. However, by the sharp entropy bound in \cite{baldauf}, this solution has type II singularities.
\end{remark}

\section{Rigidity of ancient mean curvature flows}
The goal of this section is to prove a rigidity theorem for ancient mean curvature flows. Let $\Sigma$ be a closed self-shrinker with trivial normal bundle. We show that if the rescaled mean curvature flow $\widetilde{M_t}$ converges to $\Sigma$ sufficiently fast, then $\widetilde{M_t}$ is identical with $\Sigma$.

For the simplicity of exposition, we prove the rigidity theorem for codimension $1$ ancient mean curvature flows. The proof for higher codimensional ancient mean curvature flows is similar, but it needs more complicated computations on the normal bundle of the limit.

Recall that the evolution equation of the rescaled mean curvature flow is given by
\begin{equation}\label{Eq:RMCF}
\partial_t \tilde x=-(\tilde H-\frac{1}{2}\langle \tilde x,\tilde{\mf n}\rangle)\tilde{\mf n},
\end{equation}
see \cite{huisken}, \cite[(2.1)]{CIMW}. Here $\tilde x$ is the positions of the rescaled mean curvature flow, $\tilde{\mf n}$ is the normal vector of the rescaled mean curvature flow, $\tilde{H}$ is the mean curvature of the rescaled mean curvature flow.

We prove the following theorem. 
\begin{theorem}\label{thm:main rigidity}
There exists $\alpha>0$ depending on the shrinker $\Sigma$ such that the following holds. Suppose $\widetilde{M_t}$ can be written as a graph over $\Sigma$ when $-t$ is sufficiently large, i.e. \[\frac{M_t}{\sqrt{-t}}=\{x\in\Sigma:x+\varphi(x,t)\mf{n}(x)\}\]
where $\varphi$ is a $C^2$ function with $\|\varphi\|_{C^2}\to 0$ as $t\to-\infty$. If
\[\limsup_{t\to-\infty}\varphi^2 e^{-\alpha  t}=0,\]
then $\varphi\equiv 0$ on $\Sigma\times(-\infty,0)$. In other words, $\frac{M_t}{\sqrt{-t}}\equiv \Sigma$.
\end{theorem}

The main tool is the following Carleman inequality for $C^2$ functions on $\Sigma\times(-\infty,0)$.

\begin{lemma}\label{lem:carlemann inequality}
Let $u$ be a $C^2$ function on $\Sigma\times(-\infty,0)$. Then for any $T_1< T_2<0$ and $\alpha>0$, $\delta>0$, we have
\[
\int_{T_1}^{T_2}\int_{\Sigma}\left((\alpha-\delta^{-1})u^2+2|\nabla u|^2\right)e^{-\alpha t}
\leq 
\int_{T_1}^{T_2}\int_{\Sigma}\delta(u_t-\Delta u)^2 e^{-\alpha t}
+\int_{\Sigma}u^2(\cdot,T_1)e^{-\alpha {T_1}}.
\]
\end{lemma}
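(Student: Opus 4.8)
\textbf{Proof proposal for Lemma \ref{lem:carlemann inequality}.}

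The plan is to establish the inequality by a direct integration-by-parts argument, exploiting the exponential weight $e^{-\alpha t}$ as a Carleman weight. The key observation is that the weight $e^{-\alpha t}$ is increasing in $-t$, so differentiating it in $t$ produces a favorable sign. I would start from the elementary pointwise identity obtained by multiplying $(u_t - \Delta u)$ by a suitable multiple of $u$ (here $2u$, or $u$ itself) against the weight, and integrate over $\Sigma \times (T_1, T_2)$. Concretely, write $Lu := u_t - \Delta u$ and consider $\int_{T_1}^{T_2}\int_\Sigma u\,(Lu)\, e^{-\alpha t}$. The term $\int u\, u_t\, e^{-\alpha t}$ integrates in time to boundary terms $\tfrac12 u^2 e^{-\alpha t}\big|_{T_1}^{T_2}$ plus $\tfrac{\alpha}{2}\int\int u^2 e^{-\alpha t}$; the term $-\int u\,\Delta u\, e^{-\alpha t} = \int |\nabla u|^2 e^{-\alpha t}$ after integrating by parts on the closed manifold $\Sigma$ (no boundary terms since $\Sigma$ is closed, and the weight is $t$-independent in the spatial directions).

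The next step is to absorb the cross term using Cauchy-Schwarz with parameter $\delta$: $\big|\int\int u\,(Lu)\, e^{-\alpha t}\big| \le \tfrac{1}{2\delta}\int\int u^2 e^{-\alpha t} + \tfrac{\delta}{2}\int\int (Lu)^2 e^{-\alpha t}$. Combining this with the identity above and discarding the nonnegative boundary term $\tfrac12 u^2(\cdot,T_2)e^{-\alpha T_2} \ge 0$ on the left, one arrives at an inequality of the shape
\[
\left(\tfrac{\alpha}{2} - \tfrac{1}{2\delta}\right)\int_{T_1}^{T_2}\!\!\int_\Sigma u^2 e^{-\alpha t} + \int_{T_1}^{T_2}\!\!\int_\Sigma |\nabla u|^2 e^{-\alpha t} \le \tfrac{\delta}{2}\int_{T_1}^{T_2}\!\!\int_\Sigma (Lu)^2 e^{-\alpha t} + \tfrac12\int_\Sigma u^2(\cdot,T_1) e^{-\alpha T_1}.
\]
Multiplying through by $2$ gives exactly the claimed inequality with the stated constants $(\alpha - \delta^{-1})$ on $u^2$, $2$ on $|\nabla u|^2$, $\delta$ on $(Lu)^2$, and $1$ on the initial-time boundary term. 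So the proof is essentially a bookkeeping exercise once the right multiplier is identified.

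I do not expect a serious obstacle here; the only points requiring care are (i) confirming the sign of the boundary term at $T_2$ so that it can be dropped (it is $+\tfrac12 u^2 e^{-\alpha T_2} \ge 0$ on the correct side), (ii) making sure the spatial integration by parts on $\Sigma$ produces no error terms — this uses only that $\Sigma$ is a closed manifold and that the weight does not depend on the point of $\Sigma$, so $\int_\Sigma \nabla u \cdot \nabla(e^{-\alpha t}) = 0$ in the spatial integration — and (iii) tracking the factor of $2$ so that the final constants match. One should also note the integrability/regularity hypotheses: $u \in C^2(\Sigma \times (-\infty,0))$ and $\Sigma$ compact make all the integrals over $\Sigma \times [T_1,T_2]$ finite, so Fubini and the fundamental theorem of calculus in $t$ apply without subtlety. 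The subsequent application to prove Theorem \ref{thm:main rigidity} will take $u = \varphi$ (after verifying $\varphi$ satisfies an equation of the form $\varphi_t - \Delta \varphi = $ (lower order terms controlled by $\varphi$ and $\nabla\varphi$) coming from the graphical mean curvature flow equation over $\Sigma$), choose $\delta$ small and $\alpha$ large depending on the geometry of $\Sigma$ to make the left side dominate, and let $T_1 \to -\infty$ using the hypothesis $\limsup_{t\to-\infty}\varphi^2 e^{-\alpha t} = 0$ to kill the boundary term.
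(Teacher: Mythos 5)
Your proposal is correct and is essentially the paper's own argument: multiplying $u_t-\Delta u$ by $u\,e^{-\alpha t}$ and integrating by parts in time and space is the same computation as the paper's identity $\partial_t(u^2h)-\Delta(u^2h)=2uh(u_t-\Delta u)-\alpha u^2h-2|\nabla u|^2h$ with $h=e^{-\alpha t}$, followed by the same $\delta$-weighted absorbing inequality and discarding of the (correctly signed) boundary term at $T_2$. The constants match after your factor-of-two bookkeeping, so no further changes are needed.
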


\begin{proof}
Define $h(t)=e^{-\alpha t}$. Note that $h_t=-\alpha h$. We have the following computations.
\[
\begin{cases}
\partial_t(u^2h)=2u_t uh+u^2h_t=2u_t uh-\alpha u^2 h,\\
\nabla(u^2h)=2uh\nabla u,\\
\Delta (u^2h)= 2\Delta u uh+2|\nabla u|^2 h.
\end{cases}\]
Hence, we have
\[\partial_t(u^2 h)-\Delta(u^2 h)=2uh(u_t-\Delta u)-\alpha u^2 h-2|\nabla u|^2 h.\]
Integrating both sides on $\Sigma\times[T_1,T_2]$, we obtain
\[\int_{T_1}^{T_2}\int_\Sigma \partial_t(u^2h)-\Delta(u^2 h)=\int_{T_1}^{T_2}\int_\Sigma \left(2uh(u_t-\Delta u)-\alpha u^2 h-2|\nabla u|^2 h\right).\]
Integration by parts gives
\[\int_{\Sigma}u^2(\cdot,T_2) e^{-\alpha T_2}-\int_{\Sigma}u^2(\cdot,T_1) e^{-\alpha T_1}=\int_{T_1}^{T_2}\int_\Sigma 2u(u_2-\Delta u)e^{-\alpha t}-\int_{T_1}^{T_2}\int_\Sigma\left(\alpha u^2+2|\nabla u|^2\right)e^{-\alpha t}.\]
A basic absorbing inequality gives
\[2u(u_t-\Delta u)\leq \delta^{-1} u^2+\delta(u_t-\Delta u)^2.\]
Hence, we obtain
\[
\begin{split}
\int_{T_1}^{T_2}\int_{\Sigma}\left((\alpha-\delta^{-1})u^2+2|\nabla u|^2\right)e^{-\alpha t}
&\leq 
\int_{T_1}^{T_2}\int_{\Sigma}\delta(u_t-\Delta u)^2 e^{-\alpha t}
-\int_{\Sigma}u^2(\cdot,t)e^{-\alpha {t}}\Big|_{T_1}^{T_2}\\
&\leq \int_{T_1}^{T_2}\int_{\Sigma}\delta(u_t-\Delta u)^2 e^{-\alpha t}
+\int_{\Sigma}u^2(\cdot,T_1)e^{-\alpha {T_1}}.
\end{split}
\]
\end{proof}

\begin{proof}[Proof of Theorem \ref{thm:main rigidity}]
According to the evolution equation of the rescaled mean curvature flow, we have
\begin{equation}\label{Eq:Key inequality}
|\partial_t \varphi-\Delta\varphi|^2\leq C(|\varphi|^2+|\nabla \varphi|^2)
\end{equation}
for some constant $C$ when $t$ is sufficiently large. These computations have appeared in the proof of \cite[Lemma 2.4]{Wang2014}. For the reader's convenience, we include the computations, but postpone them to the end of this section. Then Lemma \ref{lem:carlemann inequality} implies that for any $\alpha>0$, $\delta>0$,
\[
\int_{T_1}^{T_2}\int_{\Sigma}\left((\alpha-\delta^{-1})\varphi^2+2|\nabla \varphi|^2\right)e^{-\alpha t}
\leq 
\int_{T_1}^{T_2}\int_{\Sigma}C\delta(|\varphi|^2+|\nabla \varphi|^2) e^{-\alpha t}
+\int_{\Sigma}\varphi^2(\cdot,T_1)e^{-\alpha {T_1}}.
\]
So if we choose $\delta$ sufficiently small such that $C\delta<1$ and $\alpha$ sufficiently large such that $\alpha-\delta^{-1}>2$, we have
\[\int_{T_1}^{T_2}\int_\Sigma \left(\varphi^2+|\nabla\varphi|^2 \right)e^{-\alpha t}\leq \int_{\Sigma}\varphi^2(\cdot,T_1)e^{-\alpha {T_1}}.\]
So if $\limsup_{t\to-\infty}\varphi^2 e^{-\alpha  t}=0$, we have
\[\limsup_{T_1\to -\infty}\int_{T_1}^{T_2}\int_\Sigma \left(\varphi^2+|\nabla\varphi|^2 \right)e^{-\alpha t}\leq 0\ \text{for any $T_2>0$}.\]
So $\varphi\equiv 0$ on $\Sigma\times(-\infty,0)$.
\end{proof}

In the end of this section, we show the inequality (\ref{Eq:Key inequality}).

\begin{proof}[Proof of (\ref{Eq:Key inequality})]
The proof is the same as the proof of \cite[Lemma 2.4]{Wang2014} with slightly modification. For the reader's convenience we provide the details here. Given $x_0\in\Sigma$, let us choose a local parametrization of $\Sigma$ in a neighbourhood of $x_0$, $F:\Omega\to\Sigma$, satisfying the following properties:
\begin{itemize}
\item $\Omega$ is a domain in $\R^n$ containing $0$ and $F(0)=x_0$;
\item $\langle\partial_i F(0),\partial_j F(0)\rangle=\delta_{ij}$;
\item $\partial_{ij}F(0)=a_{ij}(0)\mf n(x_0)$, here $a_{ij}=A(\partial_i F(0),\partial_j F(0))$ is the second fundamental form, and $a_{ij}(0)=0$ if $i\neq j$.
\end{itemize}
Let $\tilde{F}(p)=F(p)+\varphi(p)\mf n(p)$ be a parametrization of $\widetilde{M_t}$. Here and from now on, we identify $\varphi(p)$ and $\mf n(p)$ with $\varphi(F(p))$ and $\mf n(F(p))$ respectively. Now we can compute the geometric quantities on $\widetilde{M_t}$. 

First, the tangent vectors are given by
\[\partial_i\tilde{F}=\partial_i F+(\partial_i\varphi)\mf n+\varphi\partial_i \mf n.\]
Thus the normal vector of $\widetilde{M_t}$ at $\tilde{F}(0)$ is parallel to the following vector
\begin{equation}
\mf N=-\sum_k\left[\prod_{l\neq k}(1-a_{ll}\varphi)\right](\partial_k\varphi)\partial_k F
+
\left[\prod_k(1-a_{kk}\varphi)\right]\mf n.
\end{equation}
Furthermore, we can compute the second order derivatives of $\tilde{F}$:
\[\partial_{ij}\tilde{F}=\partial_{ij}F+(\partial_{ij}\varphi)\mf n+(\partial_i\varphi)\partial_j \mf n+(\partial_j\varphi)\partial_i\mf n+\varphi\partial_{ij}\mf n.\]
At $p=0$, we can compute 
\[\partial_{ij}\mf n
=\sum_k\langle\partial_k F,\partial_{ij}\mf n\rangle\partial_k F+\langle\partial_{ij}\mf n,\mf n\rangle\mf n
=-\sum_k(\partial_j a_{ik})\partial_k F-a_{ii}a_{jj}\delta_{ij}\mf n.\]
Thus at $p=0$ the second order derivatives of $\tilde{F}$ is given by
\[
\partial_{ij}\tilde{F}
=-a_{ii}(\partial_j \varphi)\partial_i F-a_{jj}(\partial_i\varphi)\partial_j F
-\sum_k(\partial_j a_{ik})\varphi\partial_k F
+(a_{ij}-a_{ii}a_{jj}\delta_{ij}+\partial_{ij}\varphi)\mf n.
\]
Furthermore,
\begin{equation}\label{Eq:2nd derivative}
\begin{split}
\langle\partial_{ij}\tilde{F},\mf N\rangle
=&
a_{ii}(\partial_i\varphi)(\partial_j\varphi)\prod_{k\neq i}(1-a_{kk}\varphi)
+a_{jj}(\partial_i\varphi)(\partial_j\varphi)\prod_{k\neq j}(1-a_{kk}\varphi)\\
+&
(a_{ij}-a_{ii}a_{jj}\delta_{ij}\varphi+\partial_{ij}\varphi)\prod_k(1-a_{kk}\varphi)
+\varphi\sum_k\left[\prod_{l\neq k}(1-a_{ll}\varphi)\right](\partial_j a_{ik})\partial_k\varphi.
\end{split}
\end{equation}

Next we compute the pullback metric $\tilde{g}_{ij}$ from $\widetilde{M_t}$ at $p=0$:
\[\tilde{g}_{ij}=\langle\partial_i\tilde{F},\partial_j\tilde{F}\rangle=(1-a_{ii}\varphi)(1-a_{jj}\varphi)\delta_{ij}+(\partial_i\varphi)(\partial_j\varphi).\]
So at $p=0$ the determinant and the inverse of the metric are given by
\[\det(\tilde g)=1-2\sum_k a_{kk}\varphi+Q_1(p,\varphi,D\varphi),\]
\[\tilde g^{ij}\det(\tilde g)=
\begin{cases}
Q_{2ij}(p,\varphi,D\varphi)\partial_i\varphi,\quad i\neq j,\\
1-2\sum_{k\neq i}a_{kk}\varphi+Q_{2ij}(p,\varphi,D\varphi), \quad i=j.
\end{cases}\]
Here $Q_1$ and $Q_{2ij}$ are polynomials in $\varphi(p)$ and $D\varphi(p)$. Note that we have assumed that when $t$ is sufficiently large, $\|\varphi\|_{C^2}$ is sufficiently small. Hence
\[|Q_1(p,\varphi,D\varphi)|\leq C(|\varphi|+|D\varphi|),
\quad
|Q_{2ij}(p,\varphi,D\varphi)|\leq C(|\varphi|+|D\varphi|).\]

We also have
\[
\langle\tilde{F},\mf N\rangle
=
-\sum_{k}\left[\prod_{l\neq k}(1-a_{ll}\varphi)\right]\langle F,\partial_k F\rangle\partial_k\varphi
+(\langle F,\mf n\rangle+\varphi)\prod_k (1-a_{kk}\varphi).
\]

Taking inner product of (\ref{Eq:RMCF}) with $\mf N$ gives at $p=0$
\begin{equation}\label{Eq:RMCFvarphi}
\langle\partial_t \tilde{F},\mf N\rangle
=-(\tilde{H}-\frac{1}{2}\langle\tilde{F},\tilde{\mf n}\rangle)\langle\tilde{\mf n},\mf N\rangle
=-\tilde{H}\langle\tilde{\mf n},\mf N\rangle+\frac{1}{2}\langle \tilde{F},\mf N\rangle.
\end{equation}
Plugging in all the identities above to (\ref{Eq:RMCFvarphi}) gives
\begin{equation}
\partial_t\varphi \langle \mf n,\mf N\rangle = \tilde{g}^{ij}\langle\partial_{ij}\tilde{F},\mf N\rangle+\frac{1}{2}\langle \tilde{F},\mf N\rangle.
\end{equation}
Dividing both sides by $\langle \mf n,\mf N\rangle$ gives
\begin{equation}
\partial_t \varphi =\Delta\varphi+Q(p,\varphi,D\varphi,D^2\varphi),
\end{equation}
and 
\[|Q(p,\varphi,D\varphi,D^2\varphi)|\leq C(|\varphi|+|D\varphi|).\]
Note in the last inequality, there is no $|D^2\varphi|$ term. We can see this from the computations above: any second order derivative terms of $\varphi$ appear in (\ref{Eq:2nd derivative}). When $i=j$, $\partial_{ii}\varphi$ times $1$ contributes to the Laplacian $\Delta\varphi$ term. When $i\neq j$, $\partial_{ij}\varphi$ multiplies with the $\tilde{g}^{ij}$ term, so they multiply the $Q_{2ij}\partial_i\varphi$ term. Hence, it is controlled by the $C|D\varphi|$ term. Therefore, all the second order terms are controlled by $C(|\varphi|+|D\varphi|)$. This concludes the desired inequality (\ref{Eq:Key inequality}).
\end{proof}

\bibliography{bibfile}
\bibliographystyle{alpha}

\end{document}